\newtheorem{corollary}{Corollary}[section]
\newtheorem{definition}[corollary]{Definition}
\newtheorem{lemma}[corollary]{Lemma}
\newtheorem{remark}[corollary]{Remark}
\newtheorem{theorem}[corollary]{Theorem}
\newfont{\sBlackboard}{msbm10 scaled 900}
\newcommand{\mylabel}[1]{\label{#1}
            \ifx\undefined\stillediting
            \else \fbox{$#1$}\fi }
\newcommand{\BE}{\begin{equation}}
\newcommand{\EEQ}{\end{equation}}
\newcommand{\rfb}[1]{\mbox{\rm
   (\ref{#1})}\ifx\undefined\stillediting\else:\fbox{$#1$}\fi}
\newcommand{\half}   {{\frac{1}{2}}}
\newfont{\Blackboard}{msbm10 scaled 1200}
\newcommand{\bl}[1]{\mbox{\Blackboard #1}}
\newfont{\roma}{cmr10 scaled 1200}
\def\CC{\rm \hbox{C\kern-.56em\raise.4ex
         \hbox{$\scriptscriptstyle |$}\kern+0.5 em }}
\newcommand{\be}{\begin{equation}}
\newcommand{\ee}{\end{equation}}
\newcommand{\beq}{\begin{eqnarray}}
\newcommand{\eeq}{\end{eqnarray}}
\newcommand{\beqs}{\begin{eqnarray*}}
\newcommand{\eeqs}{\end{eqnarray*}}
\newcommand{\bt}{\begin{Theorem}}
\newcommand{\et}{\end{Theorem}}
\newcommand{\br}{\begin{remark}}
\newcommand{\er}{\end{remark}}
\newcommand{\bc}{\begin{Corollary}}
\newcommand{\ec}{\end{Corollary}}
\newcommand{\el}{\end{Lemma}}
\newcommand{\bd}{\begin{definition}}
\newcommand{\ed}{\end{definition}}
\newcommand{\nline}  {{\bl N}}
\newcommand{\rline}  {{\bl R}}
\newcommand{\mm}    {{\hbox{\hskip 0.5pt}}}
\newcommand{\nm}    {{\hbox{\hskip -3pt}}}
\newcommand{\bluff} {{\hbox{\raise 15pt \hbox{\mm}}}}
\def\section{\@startsection {section}{1}{\z@}{-3.5ex plus -1ex minus
    -.2ex}{2.3ex plus .2ex}{\large\bf}}
\def\be{\begin{equation}}
\def\ee{\end{equation}}
\def\ds{\displaystyle}
\begin{document}

\thispagestyle{empty}
\title[Boundary stabilization by a switching time-delay: a numerical study]{Boundary stabilization of a one-dimensional wave equation by a switching time-delay: a theoretical and numerical study}

\author{Ka\"{\i}s Ammari}
\address{UR Analysis and Control of PDEs, UR13ES64, Department of Mathematics, Faculty of Sciences of Monastir, University of Monastir, 5019 Monastir, Tunisia}
\email{kais.ammari@fsm.rnu.tn}

\author{Boumedi\`ene Chentouf}
\address{Kuwait University, Faculty of Science, Department of Mathematics, Safat 13060, Kuwait}
\email{boumediene.chentouf@ku.edu.kw; chenboum@hotmail.com}

\author{Nejib Smaoui}
\address{Kuwait University, Faculty of Science, Department of Mathematics, Safat 13060, Kuwait}
\email{n.smaoui@ku.edu.kw; nsmaoui64@yahoo.com}

\date{}

\begin{abstract}
This paper deals with the boundary stabilization problem of a one-dimensional wave equation with a switching time-delay in the boundary. We show that the problem is well-posed in the sense of semigroups theory of linear operators. Then, we provide a theoretical and numerical study of the exponential stability of the system under an appropriate delay coefficient.
\end{abstract}

\subjclass[2010]{35B35, 35B40, 93D15, 93D20}
\keywords{Switching delay; one-dimensional Wave equations; exponential stability; numerical boundary stabilization with time-delay}

\maketitle


\thispagestyle{empty}

\section{Introduction}
\setcounter{equation}{0}
This article deals with the boundary stabilization of the following switching time--delay wave equation in $(0,\ell)$:
\begin{eqnarray}
& & y_{tt}(x,t) - y_{xx}(x,t)=0, \hspace{6.8 cm} (x,t) \in (0,\ell) \times
(0,+\infty),\label{1.1}\\
& & y(0,t)  =0,\quad \quad \hspace{8.2 cm} t \in
(0,+\infty),\quad  \label{1.2}\\
& & y_x(\ell,t)  =0, \quad \quad \hspace{8.1 cm} t \in (0,2\ell), \label{1.2bis}\\
& & y_x(\ell,t)= \mu y_t(\ell, t-2\ell),  \quad \quad \hspace{6.2 cm}
t \in (2\ell,+ \infty), \label{dampdelay}\\
& &y(x,0)=y_0(x), \; y_t(x,0)=y_1(x), \quad \quad \hspace{4.7 cm} x \in (0,\ell),\label{1.4}
\end{eqnarray}
where $\ell > 0$ and $\mu \in \mathbb{R}$.

\medskip

It is well-known that the presence of a time--delay  is usually unavoidable in practice. In fact, such a phenomenon arises in many applications for different reasons and from numerous  sources. Furthermore, it has been noticed that even an arbitrarily small delay may have a destabilizing effect on an originally stable system, that is, some systems are stable in the absence of a time--delay but then become unstable as long as a delay is taken into consideration (see for instance \cite{Datko, DLP, Datko97} and \cite{NPSicon}). It turned out that unstable system are unexpectedly stabilized under the action of a ``well chosen'' time--delayed control \cite{bas, 3j, g, gt, guo, cer}, while other systems are not affected at all in the sense that the presence of a delay does have any impact on the stability property \cite{guoxu}. This may explain the very active and intensive research studies on the delay effect in the stabilization of  systems. For instance, stability results are available in literature for systems with time--delays whose negative impact is neutralized due to the presence of appropriate feedback controls  (see e.g. \cite{amman, ac1, ac2, ac3, ANPi, NPSicon, NVCOCV10}).

\medskip

Going back to the system under consideration, let us note that the term $\mu y_t(\ell,t- 2 \ell)$ can be viewed as a switched boundary control. Clearly,  such a feedback is unbounded. On the other hand, the system \rfb{1.1}--\rfb{1.4} has been shown to be exponentially stable if $\mu <0$ and in the absence of time--delay \cite{Chen}.

\medskip

It is also worth noting that in \cite{anp}, the authors considered the following system with a switching between the damping and delay,
\begin{eqnarray*}
& & y_{tt}(x,t) - y_{xx}(x,t)=0\quad \mbox{\rm in}\quad(0,\ell) \times
(0,+\infty),\\
& & y(0,t)  =0\quad \mbox{\rm on}\quad
(0,+\infty),\quad  \\
& & y_x(\ell,t)  =0\quad \mbox{\rm on}\quad
(0,2\ell),\quad  \\
& & y_x(\ell,t)= \mu_1 y_t(\ell,t)  \quad
\mbox{\rm on}\quad
(2(2i+1) \ell,2(2i +2)\ell),~ \, \, i \in \mathbb{N}, \\
& & y_x(\ell,t)= \mu_2 y_t(\ell, t-2\ell)  \quad
\mbox{\rm on}\quad
(2(2i+2)\ell,2(2i + 3)\ell),~ \, \, i \in \mathbb{N}, \\
& &y(x,0)=y_0(x)\quad \mbox{\rm and}\quad y_t(x,0)=y_1(x)\quad
\hbox{\rm in} \quad(0,\ell),
\end{eqnarray*}
where $\ell > 0$ and $\mu_1, \mu_2$ are constants. Then, it has been shown in \cite{anp} that the above system is exponentially stable provided that  $1 < \mu_2 < \mu_1$ or $\mu_1 < \mu_2 < 1$. However, in the present paper, we consider a delay on $(2\ell,+\infty)$ and conduct a theoretical as well as  numerical investigation of the exponential stability of the system (\ref{1.1})-(\ref{1.4}) in terms of the parameter $\mu$.

\medskip

The main contribution of this paper is to show the well-posedness of the problem \rfb{1.1}--\rfb{1.4} and carry on a theoretical and numerical study of the boundary stabilization of the switching delay wave system \rfb{1.1}--\rfb{1.4}. Indeed, we first establish the existence and uniqueness of solutions to the problem under consideration. Subsequently, we theoretical and numerical study the wave system \rfb{1.1}--\rfb{1.4} when one stabilizes it by a control law that uses information from the past, either by  switching or not. In other words, the stabilization outcome is established  by a control method \cite{ANPi,AG} contrary to  the methodology which is based on a feedback law. As pointed out in \cite{anp}, the control method may help to design a time--delay compensation scheme. This is the typical  predictive controller for systems with pure time--delay control, which  evokes a feedback loop to control any system. In such a situation, the predictor control is designed with the aim of eliminating any  effect of the time on the closed--loop \cite{smith, smithbis}. On the other hand, the novelty of our work compared to the previous ones \cite{anp, g, gt} is threefold: first, our switching delay occurs on the whole time interval $(2\ell, \infty)$, while it is just on $  (2(2i+2)\ell,2(2i + 3)\ell),~ \, \, i \in \mathbb{N}$ in \cite{anp}. Second, we are able to enlarge the admissibility interval of $\mu$ to $(0,1)$ instead of $((\sqrt{2}-1)^2 ,1)$ (resp. $I \subset (-2/3,0)$) imposed in \cite{g} (resp. \cite{gt}). On the other hand, contrary to \cite{gt} where the delay belongs to $\{4/{\ell}, 8/{\ell}\}$, our delay occurs at $2 \ell$, which corresponds to the optimal time of the response of the system and more importantly the optimal time of the observability. Third, numerical simulations are provided to support and ascertain the validity of our theoretical outcomes.

\medskip

The paper is organized as follows. In the second section, we show that the problem \eqref{1.1}--\eqref{1.4} is well-posed and the solutions are indeed exponentially stable. In the third section, we numerically show the exponential stability of the  system \eqref{1.1}--\eqref{1.4}.

\section{Well-posedness and stability of the problem}
\setcounter{equation}{0}

\subsection{Well-posedness of the problem}
This subsection is aimed to set our system \rfb{1.1}--\rfb{1.4} in an appropriate  functional space. This will enables us to state and prove the existence and uniqueness of solutions to \rfb{1.1}--\rfb{1.4} by means of semigroups theory of linear operators. To do so, let $A = - \partial^2_x$ be the unbounded operator in $V = L^2(0,\ell)$
with domain
$$V_{1}={\mathcal D}(A) = \left\{f \in H^2(0,\ell); \, f(0) = 0, \, f_x(\ell) = 0 \right\},$$
whereas
$$
V_\half = {\mathcal D}\left(A^\half\right) = \left\{f \in H^1(0,\ell); \, f(0) =0 \right\}.$$
Subsequently, we define the operator $P \in {\mathcal L}(\rline, V_{-\half})$ as follows:
$$  Pr = \sqrt{\mu} \, A_{-1} N r = r \sqrt{\mu} \delta_\ell, \forall \, r \in \rline, \quad \text{and} \quad  P^*y = \sqrt{\mu} \, y(\ell), \, \forall \, y \in V_\half,
$$
in which $A_{-1}$ is the extension of $A$ to $V_{-1} = \left({\mathcal D}(A)\right)^\prime$ and $\delta$ is the Dirac mass at $\ell$. Furthermore, $N$ is the Neumann map defined by $\partial_x^2(N r) = 0$ on $(0,\ell)$ and $N r(0) = 0, \partial_x(Nr)(\ell) = r$. Finally, $V_{-\half} = \left(V_\half \right)^\prime$ (the duality is in the sense of $V$).

\medskip

The ultimate objective is to write  the system \rfb{1.1}--\rfb{1.4} as a differential equation in an appropriate functional space and then invoke semigroups theory. To this end, consider the Hilbert state space
$${\mathcal V}~:=~ V_{\half} \times V.$$
Then, we define the unbounded linear operator
\be \label{op}
\left\{
\begin{array}{l}
{\mathcal  A}: {\mathcal D}({\mathcal A}) =  V_{1} \times V_{\half} \subset {\mathcal  V} \longrightarrow {\mathcal  V}, \\[2mm]
{\mathcal A} \left( \begin{array}{ccc} y \\ z\end{array}\right)
= \left(
\begin{array}{lc}
& z \\
- & Ay
\end{array}
\right), \quad \text{for each} ~ (y,z) \in {\mathcal D}({\mathcal A}).
\end{array}
\right.
\ee
The operator ${\mathcal  A}$ defined by \eqref{op} generates strongly a group of isometries $({\mathcal S}(t))_{t\in \mathbb{R}}$ in ${\mathcal V}$. We shall also denote $({\mathcal S}_{-1}(t))_{t\in \mathbb{R}}$ the extension of $({\mathcal S}(t))_{t\in \mathbb{R}}$ to $(\mathcal{D}(\mathcal{A}))^\prime := V \times V_{- \half}$.

\medskip

We have the following result:

\begin{theorem}\label{propexistunic}
The system \rfb{1.1}--\rfb{1.4} is well-posed. Specifically, for every initial date $(y_0,y_1)\in {\mathcal  V}$, the solution of \rfb{1.1}--\rfb{1.4} has the following explicit form:
$$
\left(
\begin{array}{ccc}
y(t)\\
y_t(t)
\end{array}\right)= \left\{
\begin{array}{ll}
\left(
\begin{array}{ccc}
y^0(t)\\
y_t^0(t)
\end{array}\right)
= {\mathcal S}(t)\left(
\begin{array}{ccc}y_0\\y_1\end{array}\right), \quad  t \in [0, 2 \ell], \\[5mm]
\left(
\begin{array}{ccc}
y^{j}(t)\\ y_t^{j}(t)
\end{array}\right) = {\mathcal S}(t-2j \ell)\left(
\begin{array}{ccc}y^{j-1}(2j \ell)\\y_t^{j-1}(2j \ell)\end{array}\right) +
\ds \int_{2j \ell}^{t} {\mathcal S}_{-1}(t-s)\left(
\begin{array}{ccc}0\\- \mu \, y_t^{j-1}(s- 2 \ell) \delta_\ell\end{array}\right) \, ds, \, \\[5mm]
\hspace{7. cm}  t \in [2j \ell, 2 (j+1) \ell], \quad  j \in \nline^*=\nline \setminus \{0\}.
\end{array}
\right.
$$
Moreover, we have $$(y^{j},y^{j}_t) \in C([2j\ell, 2(j+1)\ell], {\mathcal  V}), \quad \text{for} \, j \in \nline.$$
\end{theorem}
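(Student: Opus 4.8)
The plan is to use the \emph{method of steps}. Since the delay $2\ell$ is constant, on each window $[2j\ell,2(j+1)\ell]$ the delayed boundary term $\mu\,y_t(\ell,\cdot-2\ell)$ depends only on the restriction of $y$ to the preceding window $[2(j-1)\ell,2j\ell]$, which has already been determined. So I would construct the solution by induction on $j\in\nline$, the asserted closed formula being nothing but the explicit form of each step, and the inductive invariant being that $(y^{j},y^{j}_t)\in C([2j\ell,2(j+1)\ell],\mathcal V)$ and, in addition, $y^{j}_t(\ell,\cdot)\in L^2(2j\ell,2(j+1)\ell)$.

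\textbf{Initialization.} On $(0,2\ell)$ the delayed condition \rfb{dampdelay} is not yet in force: by \rfb{1.2bis} the boundary condition at $x=\ell$ is simply $y_x(\ell,t)=0$, so on this window \rfb{1.1}--\rfb{1.4} reduces to the homogeneous abstract Cauchy problem $\dot U=\mathcal A U$, $U(0)=(y_0,y_1)$, with $U=(y,y_t)$. As $\mathcal A$ generates the group $(\mathcal S(t))$ on $\mathcal V$, the first branch $(y^0,y^0_t)=\mathcal S(\cdot)(y_0,y_1)\in C([0,2\ell],\mathcal V)$ is immediate, and the trace part of the invariant will follow from the estimate discussed below.

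\textbf{Induction.} Assuming $(y^{j-1},y^{j-1}_t)$ has been produced with the invariant on $[2(j-1)\ell,2j\ell]$, note that on $[2j\ell,2(j+1)\ell]$ the boundary condition \rfb{dampdelay} reads $y_x(\ell,t)=\mu\,y^{j-1}_t(\ell,t-2\ell)$, a \emph{known} scalar function of $t$ (its argument sweeps $[2(j-1)\ell,2j\ell]$, and it is square-integrable in $t$ by the invariant). Lifting this Neumann datum by the operator $P=\sqrt\mu\,A_{-1}N$ recasts the problem as the inhomogeneous Cauchy problem
\[
\dot U^{j}(t)=\mathcal A_{-1}U^{j}(t)+\begin{pmatrix}0\\ -\,\mu\,y^{j-1}_t(\ell,t-2\ell)\,\delta_\ell\end{pmatrix},\qquad U^{j}(2j\ell)=\bigl(y^{j-1}(2j\ell),\,y^{j-1}_t(2j\ell)\bigr),
\]
whose mild solution is exactly the variation-of-constants expression appearing in the statement. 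Continuity of $U^{j-1}$ up to $t=2j\ell$ makes the initial value meaningful; the consecutive branches then agree at each $2j\ell$, so they glue into a continuous function on $[0,+\infty)$, and uniqueness there follows window by window from uniqueness for the inhomogeneous wave equation.

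\textbf{The crux.} The one genuinely nontrivial point — which I expect to be the main obstacle — is that the Duhamel integral must take values in $\mathcal V$, not merely in $(\mathcal D(\mathcal A))'=V\times V_{-\half}$, be continuous in $t$, and produce a trace $y^{j}_t(\ell,\cdot)$ that is again $L^2$, so that the induction can proceed. This is precisely the \emph{admissibility} of $P$ as a control operator for $(\mathcal S(t))$ and, dually, of the observation $P^*\varphi=\sqrt\mu\,\varphi(\ell)$ — concretely the ``hidden regularity'' estimate $\int_0^{2\ell}|\varphi_t(\ell,t)|^2\,dt\le C\,\|(\varphi_0,\varphi_1)\|_{\mathcal V}^2$ for $(\varphi,\varphi_t)=\mathcal S(\cdot)(\varphi_0,\varphi_1)$ — together with the input--output boundedness of the resulting well-posed linear system. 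For the one-dimensional wave equation with $y(0,t)=0$ this estimate on a time interval of length $2\ell$ is classical; I would prove it by a multiplier identity, or, most transparently in this $1$D situation, via d'Alembert's formula after extending by oddness across $x=0$. Granted this, the standard semigroup estimates bound $\|U^{j}\|_{C([2j\ell,2(j+1)\ell],\mathcal V)}$ and propagate the $L^2$-trace part of the invariant, the induction closes, and the theorem follows; everything else is routine bookkeeping over $j\in\nline$.
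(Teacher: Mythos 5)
Your proposal is correct and follows essentially the same route as the paper: induction over the windows $[2j\ell,2(j+1)\ell]$ (method of steps), with the free group on the first window, the Duhamel/variation-of-constants formula with the lifted Neumann datum $Pz^j$ on subsequent windows, and the whole argument hinging on the admissibility of $P$ --- equivalently the hidden-regularity trace estimate \rfb{CACHE1} --- to place the Duhamel term in $C([2j\ell,2(j+1)\ell],\mathcal V)$ and to keep $y_t^{j}(\ell,\cdot)$ in $L^2$ so the induction closes. The only cosmetic difference is that the paper obtains the key estimate by citing the transposition method and the admissibility theorem of Bensoussan et al./Tucsnak--Weiss rather than a multiplier or d'Alembert computation.
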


\begin{proof}

Before proving this result, we shall  define the energy corresponding to a solution of the system \rfb{1.1}--\rfb{1.4} as follows:
\begin{equation}\label{energy}
\hspace{3cm}
E(t) =\frac{1}{2}\int_0^\ell \left[  y_x(x,t)^2+  y_t(x,t) ^2 \right] dx.
\end{equation}
Then, consider the evolution problems
 \be
\label{OPEN1}
u_{tt} ^j(t) + A u^j(t)  \,
=  Pz^j(t), \,\hbox{ in } (2j\ell,2(j+1) \ell), \, j \in \nline^*,
\ee
\be
\label{eq3}
u^j(2j\ell)= u_t^j(2j\ell) = 0, \, j \in \nline^*.
\ee

\be
\label{eq4}
\phi_{tt}(t) +
 A\phi(t)  \,
=  0, \, \hbox{ in }  (0,+\infty),
\ee
\be
\label{OPEN2}
\phi(0)= \phi_0, \, \phi_t(0) = \phi_1.
\ee

Subsequently, we shall investigate the regularity of $u^j$ when $z^j \in L^2(2\ell j,2(j+1)\ell j), \, j \in \nline^*$. Using  standard energy estimates, one can readily verify that
$$y^j \in C\left([2j\ell,2(j+1)\ell];V\right) \cap C^1\left([2j\ell,2(j+1)\ell];V_{- \half}\right).$$
In turn, in the case when the operator
$P$ satisfies an admissibility condition, then the solution $y^j$ has more regularity. Indeed, we have the following result, which is a special case of the general transposition method \cite{linsmag}).

The system \rfb{eq4}--\rfb{OPEN2} has a unique solution $\phi$ such that
$$
\phi \in C([0,2 \ell];V_\half) \cap C^{1}([0,2  \ell];V),
$$
$$
(\phi,\phi_t)(t) = {\mathcal S}(t)\left(
\begin{array}{ccc}\phi_0\\\phi_1\end{array}\right), \quad
\forall t \in [0, 2  \ell].
$$
Furthermore, we have
$P^* \phi(\cdot)\in H^1(0,2 \ell), $ and for all $T \in (0, 2\ell)$, there exists a constant
$C>0$
such that
\be
\Vert (P^* \phi)^{\prime}(\cdot)\Vert_{L^2(0,T)}\le C \, \Vert (\phi_0,\phi_1)\Vert_{
V_\half \times V}, \quad \text{for all} \; (\phi_0,\phi_1)\in V_\half \times V.
\label{CACHE1}
\ee

\begin{lemma} \label{ex}
Assume that $z^j \in L^2([2j\ell,2(j+1)\ell]), \, j \in \nline^*$.
Then, the system \rfb{OPEN1}--\rfb{eq3} posses a unique solution with the following  regularity
\be
u^j \in C([2j\ell,2 (j + 1) \ell];V_\half) \cap C^{1}([2\ell j,2 (j + 1) \ell];V), \forall  j \in \nline^*.
\label{REG1}
\ee
Moreover, we have
$$
(u^{j},u_t^{j})(t) =   \ds \int_{2j\ell}^{t} {\mathcal S}_{-1}(t-s)\left(
\begin{array}{ccc}0\\P v_j(s)\end{array}\right) \, ds, \quad
\forall t \in [2j \ell ,  2 (j+1) \ell],\  \forall j \in \nline^*.
$$
\end{lemma}
\begin{proof}
Let
$$W(t) = \left(
\begin{array}{ll}
u^j(t+2j\ell) \\
\nm
\ds
u_t^j(t+2j\ell)
\end{array}
\right).$$
Thereafter, the system \rfb{OPEN1}--\rfb{eq3} can be formulated as follows
\[
W_t^j + {\mathcal A} W^j(t) = {\mathcal P} z^j(t+2j\ell) \hbox{ on } (0, 2\ell), \, W^j(0) = 0,
\]
where
\[
{\mathcal A} = \left(
\begin{array}{cc}
0 & - I \\
\nm
\ds
A &0
\end{array}
\right): V_\half \times V \rightarrow [{\mathcal D}({\mathcal A})]^{\prime} ,
\]
\[
{\mathcal P} = \left(
\begin{array}{ll}
0 \\
\nm
\ds
P
\end{array}
\right): \rline \rightarrow [{\mathcal D}({\mathcal A})]^{\prime}.
\]
Since the operator ${\mathcal A}$ is skew adjoint operator, it generates a group of isometries ${\mathcal T}(t)$ in $[{\mathcal D}({\mathcal A})]^{\prime} $. Clearly, ${\mathcal T}(t) = {\mathcal S}_{-1}(t)$.

It is easy to check that the operator ${\mathcal P}^* : {\mathcal D}({\mathcal A}) \rightarrow \rline$ is given by
\[
{\mathcal P}^*  \left(\begin{array}{c}
u^j \\
z^j
\end{array}
\right) = P^* z^j , \,\text{for all} \,
(u^j,z^j) \in  {\mathcal D}({\mathcal A}).
\]
Whereupon
\[
{\mathcal P}^*{\mathcal T}^*(t)
\begin{pmatrix} \phi_0 \cr \phi_1 \end{pmatrix} = P^* \phi_t(t), \, \text{for all} \,
(\phi_0,
\phi_1) \in  {\mathcal D}({\mathcal A}),
\]
where $\phi$ is a solution of \rfb{eq4}--\rfb{OPEN2}. This together with \rfb{CACHE1} imply that there exists a constant $C > 0$ such that for all $T\in (0, 2\ell)$
\[
\int_{0}^{T}  \left| {\mathcal P}^*{\mathcal T}^*(t) \left(\begin{array}{c}
\phi_0 \\
\phi_1
\end{array}
\right) \right|^2 \, dt \le C \, ||(\phi_0,\phi_1)||^2_{V_\half \times V}, \, \text{for all} \,
(\phi_0,\phi_1) \in  {\mathcal D}({\mathcal A}).
\]
Invoking Theorem 3.1 in \cite[p.187]{ben} (see also \cite{tucsnakweiss}), the above estimate implies the required interior regularity  \rfb{REG1}.
\end{proof}

Now, it suffices to use induction in order to establish the existence result for the system \rfb{1.1}--\rfb{1.4}. To proceed, we set on $[0,2\ell]$ (case   $j=0$):
$$
\left(
\begin{array}{ccc}
y^0(t)\\ y^0_t(t)
\end{array}\right) = {\mathcal S}(t)\left(
\begin{array}{ccc}y_0\\y_1\end{array}\right), \, \text{for all} \, t \in [0,2\ell].$$
It is clear that the above definition provides a solution of \rfb{eq4}--\rfb{OPEN2} on $(0,2\ell)$, with the regularity $(y^0,y^0_t) \in C([0,2\ell];{\mathcal V})$. Next, if $j\geq 1$, then we define for all $t\in [2j \ell ,2 (j+1) \ell]$
$$
\left(
\begin{array}{ccc}
y^{j}(t) \\ y_t^{j}(t)
\end{array}
\right) =
\left(
\begin{array}{ccc}
\phi(t+2j\ell) \\ \phi_t(t+2j\ell)
\end{array}\right) +
\left(
\begin{array}{ccc}
u^{j}(t)\\ u_t^j(t)
\end{array}
\right)
$$
$$
={\mathcal S}(t+2j\ell)\left(
\begin{array}{ccc}y^{j-1}(2j\ell) \\ y_t^{j-1}(2j\ell)\end{array}\right)   +\ds \int_{2j\ell}^{t} {\mathcal S}_{-1}(t-s)\left(
\begin{array}{ccc}0\\- \mu \, y_t^{j-1}(\ell,s - 2 \ell) \delta_\ell\end{array}\right) \, ds,
$$
where $u^j$ (resp. $\phi$)  is the solution of \rfb{OPEN1}--\rfb{eq3} (resp. \rfb{eq4}--\rfb{OPEN2}) and
$$z^j(t) = - \mu \, y^{j-1}_t(t - 2 \ell).$$
The latter belongs to $L^2(2j\ell,2(j+1)\ell)$ since the operator $P^*$ is an input admissible operator (see \cite{aht} for more details) and as  $\phi_0 = y^{j-1}(2j\ell),$  $\phi_1 = y_t^{j-1}(2j\ell)$. This argument allows us to claim that such a solution has the desired regularity.
\end{proof}

\subsection{Stability of the system}

We have the following stability result:

\begin{theorem} \label{princ}
For any $\mu \in (0, 1)$, there exist two  positive constants $M$ and $\omega$ such that for all initial data in ${\mathcal V}$, the energy $E(t)$, corresponding to the  solution of problem \eqref{1.1}-\eqref{1.4}, satisfies
\begin{equation}\label{expestimate}
E(t) \le M \, e^{- \, \omega t}, \quad \forall t \geq 0.
\end{equation}
The constant $M$ depends  on the initial data, on $\ell$ and $\mu$, whereas the rate $\omega$ solely depends on the physical parameters $\ell$ and $\mu$.
\end{theorem}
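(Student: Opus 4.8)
The strategy is to linearise the problem by passing to travelling--wave (d'Alembert) coordinates, which turns the PDE into a scalar linear recursion, and then to read off the exponential decay from the location of the roots of the associated characteristic polynomial; the threshold $\mu\in(0,1)$ will come out precisely of this root analysis. Concretely, I would first write the solution on $(0,\ell)\times(0,\infty)$ as $y(x,t)=\phi(x+t)-\phi(t-x)$, a form which solves \rfb{1.1} and satisfies the Dirichlet condition \rfb{1.2} automatically, with $\phi'\in L^2_{loc}(-\ell,\infty)$ --- the regularity being exactly the one granted by Theorem \ref{propexistunic} and Lemma \ref{ex} on each interval $[2j\ell,2(j+1)\ell]$, with the pieces $y^j$ matched at the switching times $t=2j\ell$. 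Matching the Cauchy data \rfb{1.4} identifies $\phi'$ on $(-\ell,\ell)$, namely $\phi'(s)=\frac12(y_0'(s)+y_1(s))$ for $s\in(0,\ell)$ and $\phi'(s)=\frac12(y_0'(-s)-y_1(-s))$ for $s\in(-\ell,0)$, so that $\int_{-\ell}^{\ell}(\phi')^2=E(0)$. Since $y_x=\phi'(x+t)+\phi'(t-x)$ and $y_t=\phi'(x+t)-\phi'(t-x)$, the energy \eqref{energy} collapses to the clean formula $E(t)=\int_{t-\ell}^{\,t+\ell}\big(\phi'(u)\big)^2\,du$ for all $t\ge0$.

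Next I would feed the boundary conditions at $x=\ell$ into this representation: \rfb{1.2bis} forces $\phi'(s)=-\phi'(s-2\ell)$ for $s\in(\ell,3\ell)$, while \rfb{dampdelay} forces $\phi'(s)=(\mu-1)\phi'(s-2\ell)-\mu\,\phi'(s-4\ell)$ for $s>3\ell$. Introducing $h_n(\sigma):=\phi'(2n\ell-\ell+\sigma)$ for $\sigma\in[0,2\ell]$, these become $h_1=-h_0$ and $h_n=(\mu-1)h_{n-1}-\mu h_{n-2}$ for $n\ge2$; since the recursion is linear and both seeds are proportional to $h_0$, one gets $h_n=c_n h_0$ with real scalars given by $c_0=1$, $c_1=-1$, $c_n=(\mu-1)c_{n-1}-\mu c_{n-2}$. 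The energy formula then gives $E(2n\ell)=c_n^2\,E(0)$, and for $t\in[2n\ell,2(n+1)\ell]$ the window $(t-\ell,t+\ell)$ sits inside $(2n\ell-\ell,\,2n\ell+3\ell)$, which is exactly covered by the supports of $h_n$ and $h_{n+1}$, so that $E(t)\le(c_n^2+c_{n+1}^2)\,E(0)$.

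It then remains to analyse the characteristic polynomial $r^2+(1-\mu)r+\mu$ of the recursion. By the Schur--Cohn (Jury) test, both of its roots lie in the open unit disc if and only if $|\mu|<1$ and $|1-\mu|<1+\mu$, that is, if and only if $\mu\in(0,1)$ --- this is precisely the asserted range; equivalently one can compute $r_{1,2}=\frac12(\mu-1\pm\sqrt{\mu^2-6\mu+1})$ and check $|r_{1,2}|<1$ directly (the roots being real on $(0,3-2\sqrt2\,]$, a conjugate pair of modulus $\sqrt\mu$ on $[3-2\sqrt2,1)$, and a genuine double root only at $\mu=3-2\sqrt2$). Consequently, for $\mu\in(0,1)$ and any $\rho\in(0,1)$ strictly larger than $\max(|r_1|,|r_2|)$ one has $|c_n|\le C_\mu\,\rho^n$, the polynomial factor at the double root being absorbed into $\rho$. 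Plugging this into the bound of the previous paragraph yields $E(t)\le(c_n^2+c_{n+1}^2)E(0)\le C'_\mu\,\rho^{2n}E(0)$ on $[2n\ell,2(n+1)\ell]$, and since there $2n\ge t/\ell-2$ we get $\rho^{2n}\le\rho^{-2}\rho^{t/\ell}$, i.e.\ \eqref{expestimate} with $\omega=\ell^{-1}\ln(1/\rho)>0$ depending only on $\ell$ and $\mu$, and $M=C'_\mu\rho^{-2}E(0)$ depending in addition on the data through $E(0)$.

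The main obstacle is the root-location step: everything hinges on proving that the two characteristic roots lie strictly inside the unit disc exactly for $\mu\in(0,1)$ and on extracting from this a uniform geometric estimate $|c_n|\le C_\mu\rho^n$, paying special attention to the borderline value $\mu=3-2\sqrt2$ where one must absorb a linear factor in $n$. A secondary, more routine point is to justify the d'Alembert representation and the gluing of the pieces $y^j$ at the switching instants at the finite--energy level, which is taken care of by Theorem \ref{propexistunic} and Lemma \ref{ex}.
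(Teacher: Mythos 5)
Your proposal is correct and follows essentially the same route as the paper: the d'Alembert representation $y=\phi(x+t)-\phi(t-x)$, the reduction of the boundary conditions to the linear recursion with characteristic polynomial $\lambda^2+(1-\mu)\lambda+\mu$, the verification that both roots lie in the open unit disc exactly for $\mu\in(0,1)$, and the resulting geometric bound on the energy over the windows $[2n\ell,2(n+1)\ell]$. Your only real departure is cosmetic but welcome: you reduce to a scalar recursion $h_n=c_nh_0$ instead of the paper's $2\times2$ matrix iteration, and you explicitly handle the double root at $\mu=3-2\sqrt{2}$ (where the paper's claim that the eigenvalues are simple, and hence its diagonalization of $G_\mu$, actually fails) by absorbing the polynomial factor into a slightly larger $\rho$.
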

\begin{proof}
First, we seek the solution  $u$ of the system (\ref{1.1})--(\ref{1.4})  in the
form:
\begin{equation}\label{defu}
u(x,t)=\Theta(x+t)-\Theta(t-x),  \quad x \in (0,\ell),\  t\geq 0,
\end{equation}
where $\Theta$ is a function to be found in $H^1_{\rm loc} (-\ell,+\infty)$.

In light of (\ref{defu}), the boundary condition (\ref{1.2}) clearly holds. Thereafter, the initial conditions (\ref{1.4}) are satisfied by taking
\beqs
\Theta(x)&=&-\frac12
u_0(-x)+\frac12\int_0^{-x} u_1(s)ds \quad \text{for all} \,
x\in (-\ell,0),\\
\Theta(x)&=&\frac12 u_0(x)+\frac12\int_0^x u_1(s)ds \quad \text{for all} \,
x\in [0,\ell). \eeqs
Subsequently, (\ref{1.2bis}) holds when
$$
\Theta'(\ell+t)+\Theta'(t-\ell)=0, \hbox{ for } 0<t<2\ell,
$$
that is,
$$
\Theta'(y)=-\Theta'(y-2\ell ) \, \text{for all} \, y \in (\ell, 3\ell).
$$
Whereupon, the existence of $\Theta$ on $(\ell, 3\ell)$ follows as we know the right-hand side.

In turn, (\ref{dampdelay})  is fulfilled whenever
$$
\Theta'(\ell+t)+\Theta'(t-\ell)=\mu (\Theta'(-\ell+t)-\Theta'(t-3\ell)), \hbox{
for } t\in (2\ell,+\infty),
$$
which can be rewritten as follows
\be
\label{DL}
\Theta'(y) = (-1+\mu)\Theta'(y-2\ell ) - \mu \Theta^\prime (y-4\ell), \text{for all} \, y \in
(3\ell,+\infty).
\ee
Next, we can show that $\Theta$ is well-defined on the whole interval
$(-\ell, \infty)$ by means of an induction argument.

Then, we can define, for $y>5\ell$, the vector
$$R(y):=\left(
\begin{array}{l}
\; \; \; \Theta'(y)\\
\Theta'(y-2\ell)
\end{array}
\right ),
$$
which, together with (\ref{DL}), implies that
$$
R(y)=G_\mu R(y-2\ell),
$$
where $G_\mu$ is the matrix
$$
G_\mu =\left(
\begin{array}{ll}
\mu-1 & - \mu\\
\; \; \; 1& \; 0
\end{array}
\right).
$$
Arguing as  in \cite{g, anp}, it suffices to compute the eigenvalues of the matrix $G_\mu$ whose characteristic polynomial is given by
$$
p_\mu(\lambda)=\lambda^2 + (1-\mu)\lambda +\mu.
$$
The roots of $p_\mu$ are given by
$$
\lambda =\frac{\mu-1\pm\sqrt{\mu^2-6 \mu+1}}{2},
$$
and hence the modulus of the eigenvalues of $G_\mu$ is strictly less than $1$ if and only if
\begin{equation}\label{cdspect}
|\mu-1\pm\sqrt{\mu^2-6 \mu+1}|<2.
\end{equation}
A simple calculation shows that if $\mu^2-6 \mu+ 1\geq 0$, then (\ref{cdspect}) holds
if and only if
\begin{equation}\label{cdspect1}
0<\mu\leq 3-2\sqrt{2}.
\end{equation}
However, in the case  $\mu^2-6 \mu+1 < 0$, then  (\ref{cdspect}) is valid
if and only if
\begin{equation}\label{cdspect2}
  3-2\sqrt{2} < \mu <1.
\end{equation}
Whereupon, (\ref{cdspect}) holds if and only if $\mu\in (0,1)$.

On the other hand, since
$$
p_\mu'(\lambda)= 2 \lambda + 1 - \mu,
$$
we can conclude that for  $\mu \in (0, 1)$,  the  eigenvalues of $G_\mu$ are  of modulus $<1$ and simple. In such an event, there exists an invertible matrix $V_\mu$ such that
$$
G_\mu=V_\mu^{-1} D_\mu V_\mu,
$$
where $D_\mu$ is the diagonal matrix made of the eigenvalues of $G_\mu$.

\medskip

Once again, using an inductive argument, we can deduce that for all $j\in \mathbb N$, and for all $y \in (5\ell + 2 j\ell, 5\ell + 2(j+1)\ell]$, we have
$$
Q(y)=G_\mu Q(y-2j\ell),
$$
in which
$$
Q(y):=\left(
\begin{array}{lll}
\; \; \; \Theta' (y)\\
\Theta' (y-2\ell)
\end{array}
\right).
$$
Combining the latter with the above factorization of $G_\mu$, we obtain
$$
Q(y)=V_\mu^{-1} D\mu^j V_\mu Q(y-2j\ell).
$$
This leads us to find a positive constant $C_\mu$, depending only on $\mu$, such that for all $j \in \mathbb N$, and  all $y \in (5\ell+2j\ell, 5\ell+2(j+1)\ell]$, we have

\begin{equation}\label{normeit}
 \|Q(y)\|_2\leq C_\mu \rho_\mu^j \|Q(y-2j\ell)\|_2,
\end{equation}
where $\rho_\mu$ is the spectral radius of $D_\mu$ that is $<1$ as long as $\mu \in (0,1)$.

\medskip

By virtue of (\ref{energy}) and (\ref{defu}), a simple computation yields
$$
E(t)=\int_{-\ell}^{\ell} \Theta' (x+t)^2 \,dx.
$$
Now, we use the same the arguments as in \cite{anp, g, gt} to conclude the exponential decay of the system. To proceed, for all $j\in \mathbb N$,
and for all $t \in (2\ell+ 2j\ell, 2\ell+ 2(j+1) \ell]$, one can apply (\ref{normeit}) with $y=x+t$, for any $x\in (-\ell,\ell)$. This implies that
\begin{eqnarray*}
E(t)&\leq& \int_{-\ell}^{\ell} \|Q(x+t)\|_2^2 \,dx\\
&\leq& C_\mu^2\rho_{\mu}^{2j}  \int_{-\ell}^{\ell} \|Q(x+t-2j\ell)\|_2^2 \,dx.
\end{eqnarray*}
Afterwards, using the fact that whenever $t \in (2\ell+2j\ell, 2\ell+2(j+1)\ell]$ and
 $x\in (-\ell,\ell)$, the quantity $x+t-2j\ell$ belongs to a compact set, and consequently one can conclude that
$$
\int_{-\ell}^{\ell} \|Q(x+t-2j\ell)\|_2^2 \,dx
$$
is bounded independently of $j$. Therefore, we managed to find
a constant $K_\mu$ such that for all $j\in \mathbb N$,
and for  all $t \in (2\ell+2j\ell, 2\ell+2(j+1)\ell]$, we have the estimate
$$
E(t) \leq  K_\mu \rho_\mu^{2j}.
$$
Lastly, the latter leads to the required result since
$\rho_\mu^{2j}=e^{2j\ln \rho_\mu}\leq \rho_\mu^{-4} \, e^{t\frac{\ln \rho_\mu}{\ell}}$.

\end{proof}

\section{Numerical study}\label{mainp}
\setcounter{equation}{0}

Numerical solutions for the one-dimensional wave equation (1.1)-(1.5) with and without a presence of a switching time-delay were simulated using COMSOL Multiphysics software. This software uses the finite element method (FEM) to approximate the partial differential equation and numerically finds its solutions. The solutions are computed for different values of $\mu$.

\indent  First, we consider the following wave equation \textbf{without} the presence of a switching time-delay:
\begin{eqnarray}
& &  y_{tt}(x,t) - y_{xx}(x,t)=0, \quad \hspace{7.2 cm} \;  (x,t) \in  (0,\ell) \times
(0,+\infty),\label{nod1}  \\
 & &y(0,t)  =0,  \quad \hspace{9.3 cm} \; t \in
(0,+\infty),   \label{nod2}\\
& & y_x(\ell,t)= \mu y_t(\ell, t),  \quad \hspace{8.1 cm}
t \in  (0,+ \infty), \label{nod3}  \\
& & y(x,0)=y_0(x)\quad \mbox{\rm and}\quad y_t(x,0)=y_1(x), \quad
\hspace{4.6 cm}
x \in (0,\ell),\label{nod4}
\end{eqnarray}
where  $\mu$ is a constant.

In this case, we take $\ell=1$ and $y_0(x)=y_1(x)=\sin(\pi x)$. Then, it has been observed that the dynamics of the system (\ref{nod1})-(\ref{nod4}) is exponentially stable if $\mu \, < 0$, and unstable if $\mu \, >0$. More precisely, Figure 1 depicts a 3-dimensional landscape of the dynamics of the above wave equation which indicates that the dynamics exponentially converges to the zero dynamics when $\mu \, < \, 0$. Figure 2 shows the energy $E(t)$, as defined by (\ref{energy}), versus time for different values of $\mu$. It is shown that the energy converges exponentially faster as the values of $\mu$ decreases from $\mu=-0.1$ to $\mu=-0.5$. On the other hand, Figure 3 shows that the dynamics of the wave system when $\mu > 0$ is unstable, and Figure 4 indicates that the corresponding energies for the dynamics presented in Figure 3 grow without bounds as the values of $\mu$ increases from $0.1$ to $0.5$. These results are in line with the theoretical findings established in \cite{Chen}.

Then, the wave system (1.1)-(1.5) \textbf{with} the presence of a switching time-delay is considered.  The system is simulated when $\ell=1$, $y_0(x)=y_1(x)=\sin(\pi x)$, and for different values of $\mu$. Figure 5 presents the dynamics of $y(x,t)$, and Figure 6 shows the energy, E(t), vs. time when  $\mu \, < \,0$. These figures indicate that the dynamics diverges faster as the value of $\mu$ decreases. Therefore, the presence of a time delay $\tau=2$ destabilizes a stable dynamics when $\mu$ is negative. This result is in accordance with our Theorem 2.3 and confirms the situation where a delay destabilizes a stable system as in [2, 16-18, 25].

In turn, if we let $ 0 < \mu < 1$ as in Section 2.3, and keep the time delay $\tau=2$, that is, $\ell=1$, then the dynamics of the controlled wave equation becomes stable (see Figures 7 and 8). The dynamics of the solution $y(x,t)$ to (1.1)-(1.5) are depicted in Figure 7, which shows the rapid stability of the solution. Furthermore, Figure 8a) shows that the energy, $E(t)$, decays exponentially faster as $\mu$ increases from 0.1 to 0.5, and Figure 8b) depicts that the energy decays sinusoidally at a slower rate from $\mu=0.8$ up to $\mu=0.95$, and grows at $\mu=1.0$. These results reinforce  the stability results shown in Section 2.3, and in accordance with the results in [12, 13, 20, 21, 23] that indicate that unstable systems can be stabilized under the action of a ``well chosen" time-delayed control. It should be noted that our numerical simulations indicate that  the switching time-delay with $\tau=2$ used to stabilize  the system (1.1)-(1.5) is not effective beyond $\mu=1$. That is for $\mu \ge 1$, the dynamics of the one-dimensional wave equation is unstable with or without the presence of a switching time-delay (see Figure 9).

\vspace{5mm}

\begin{figure}[!ht]
\centering
\subfigure[]{
\includegraphics[scale=0.5]{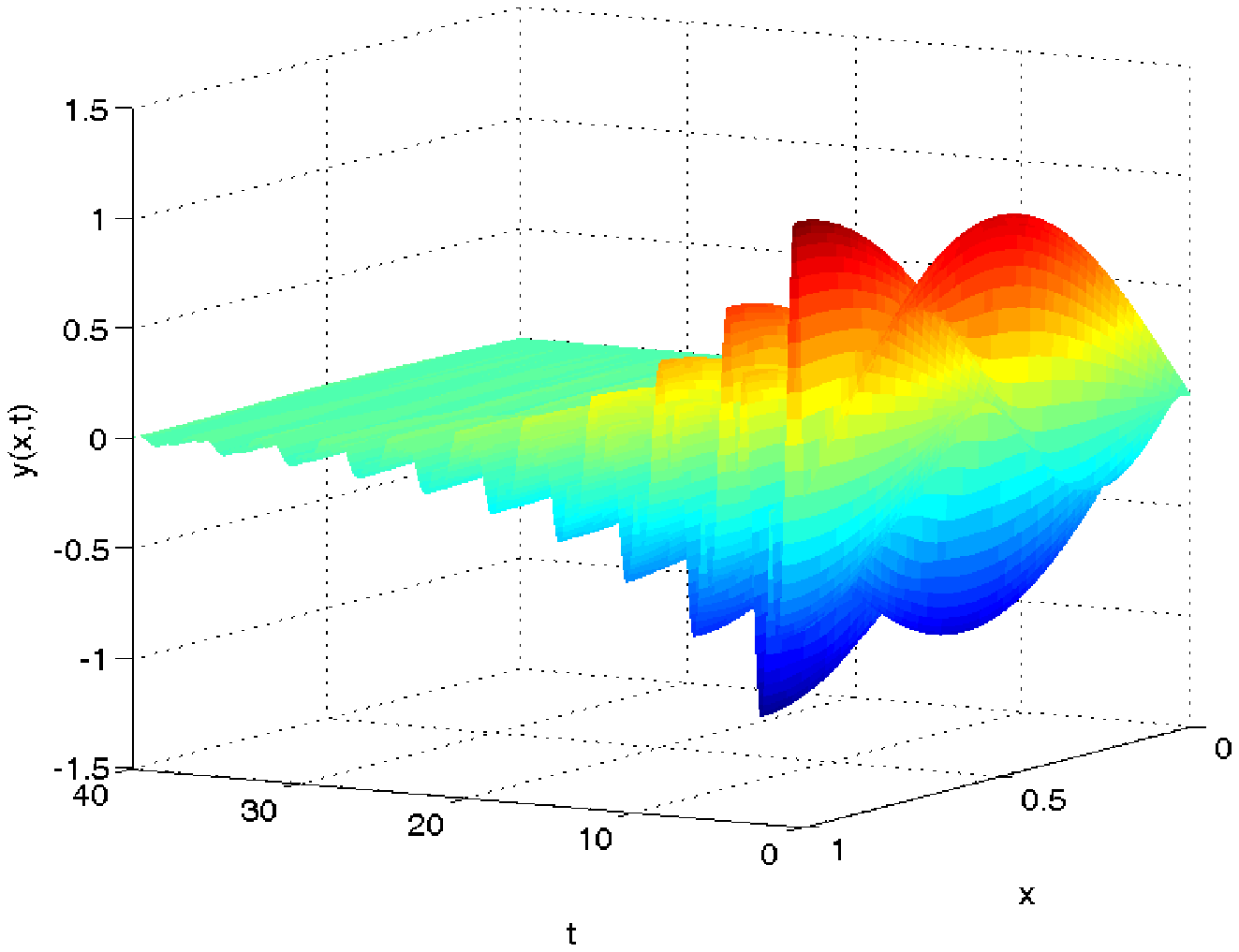}
\label{fig:subfiga1}
}
\subfigure[]{
\includegraphics[scale=0.5]{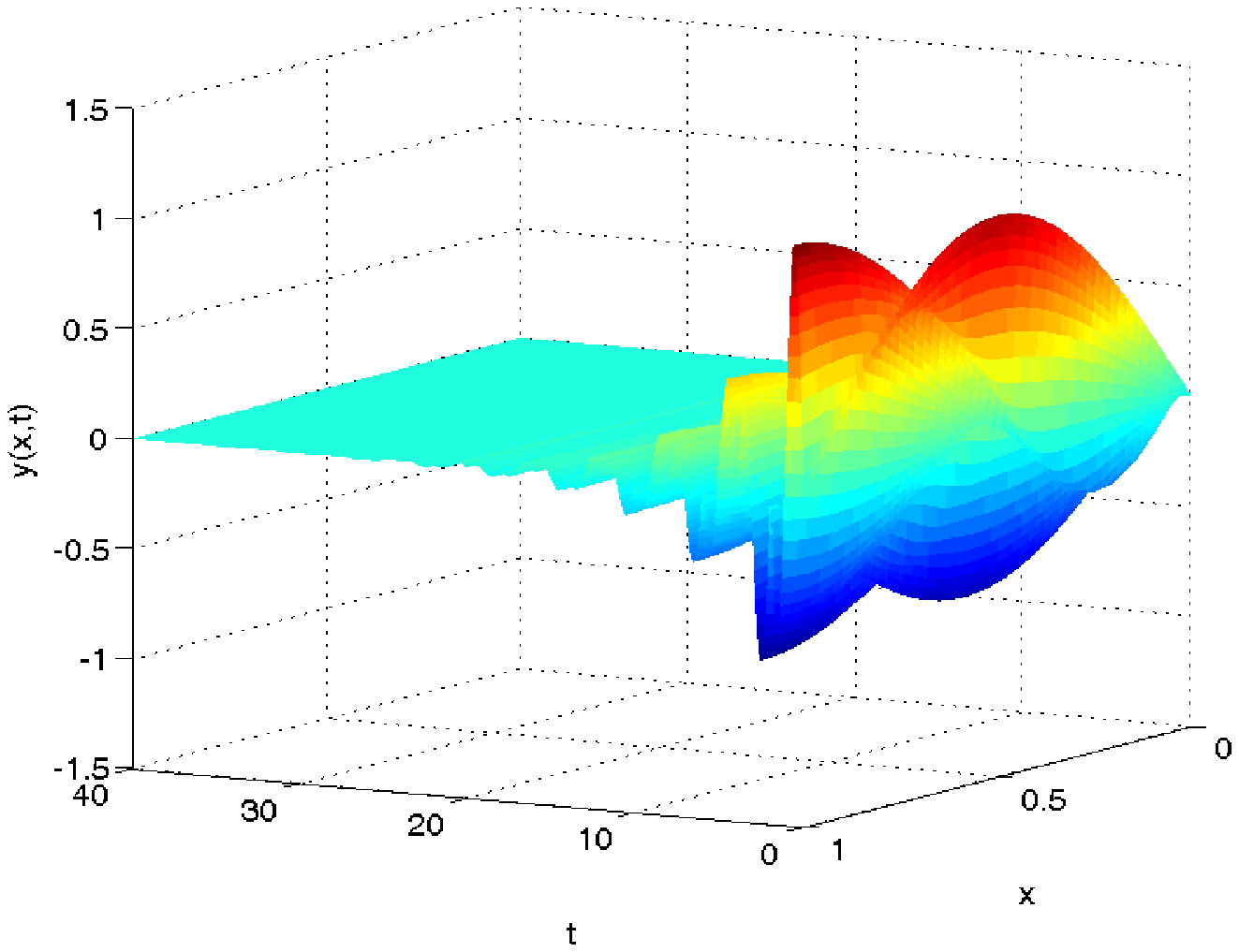}
\label{fig:subfigb1}
}
\subfigure[]{
\includegraphics[scale=0.5]{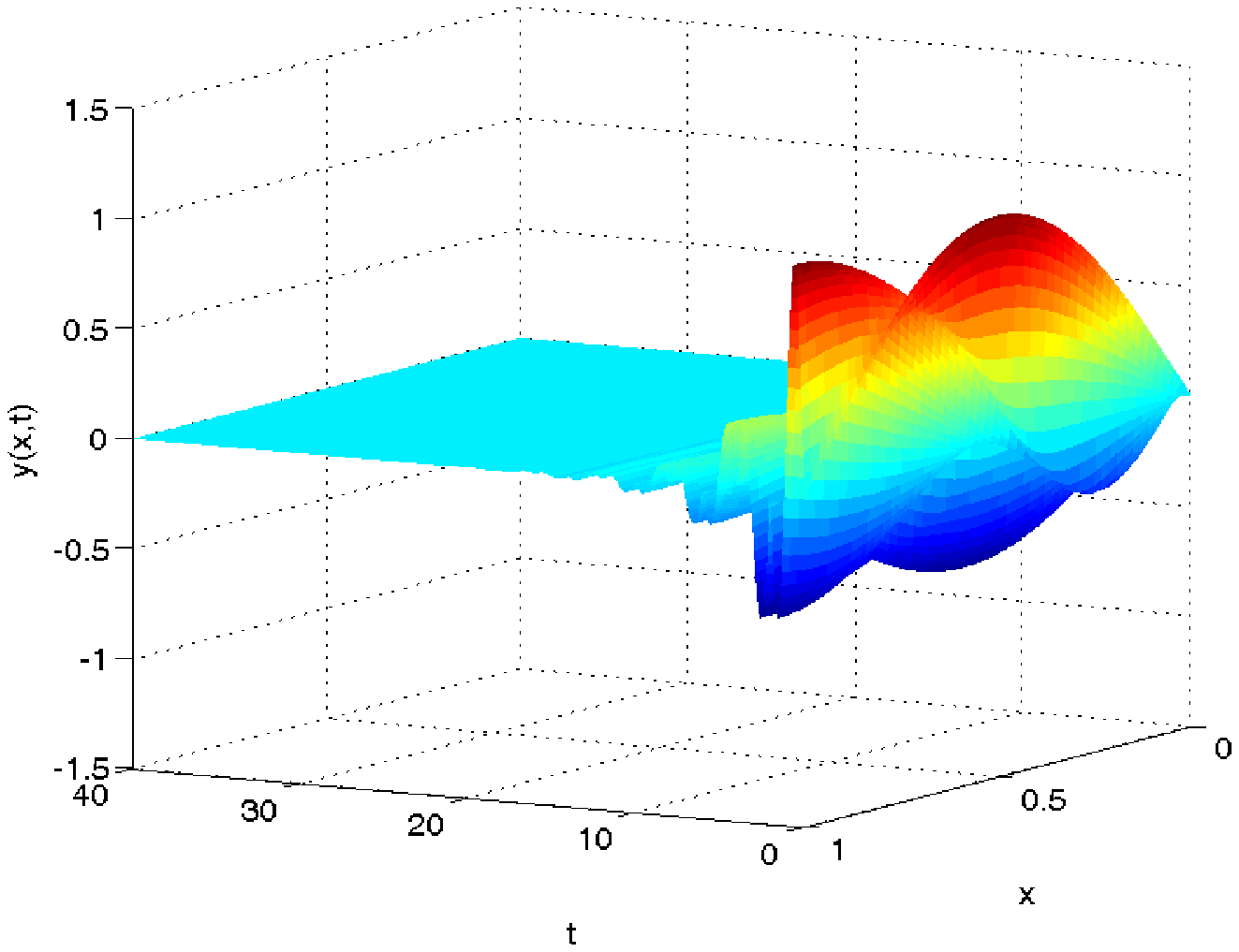}
\label{fig:subfigf1}
}
\subfigure[]{
\includegraphics[scale=0.5]{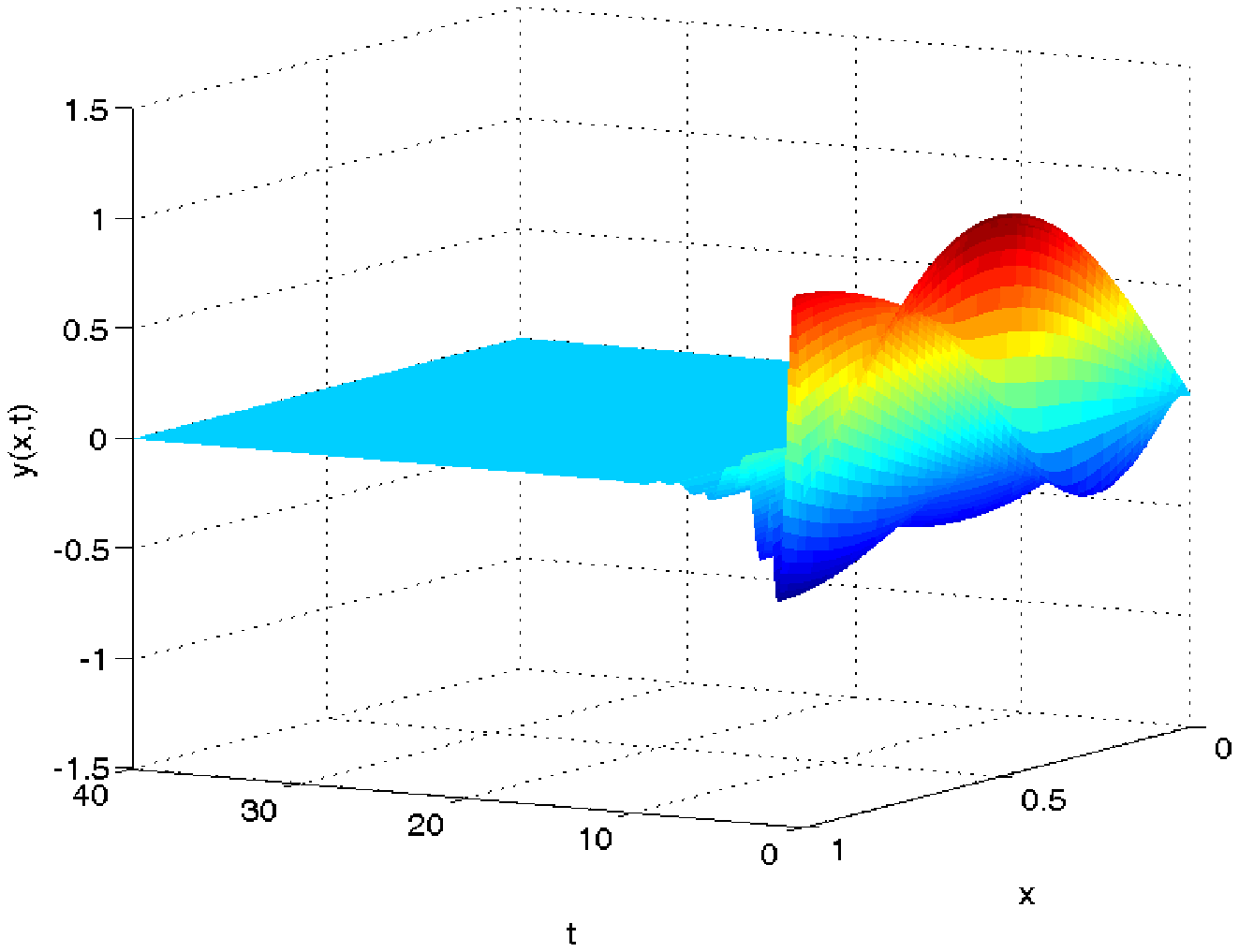}
\label{fig:subfige1}
}
\caption{A 3-d landscape of the dynamics of the wave equation without delay (\ref{nod1})-(\ref{nod4}), when $l=1$, $y(x,0)=\sin \pi x$, and $y_t(x,0)=\sin \pi x$;  (a) $\mu=-0.1$; (b) $\mu=-0.2$;  (c) $\mu = -0.3$; (d) $\mu=-0.5$.}
\label{fig:Chapter4-03a}
\end{figure}

\newpage
\begin{figure}[!ht]
\begin{center}
\includegraphics[width=4.5 in,height=2.6 in ]{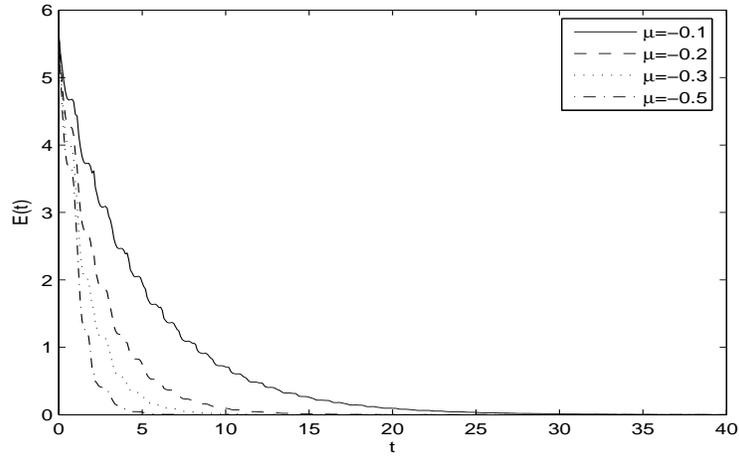}
\end{center}
\caption{The energy, $E(t)$, of the wave system without delay (\ref{nod1})-(\ref{nod4}), versus time for various values of $\mu$.}
\end{figure}

\begin{center}
\begin{figure}[!ht]
\centering
\subfigure[]{
\includegraphics[scale=0.5]{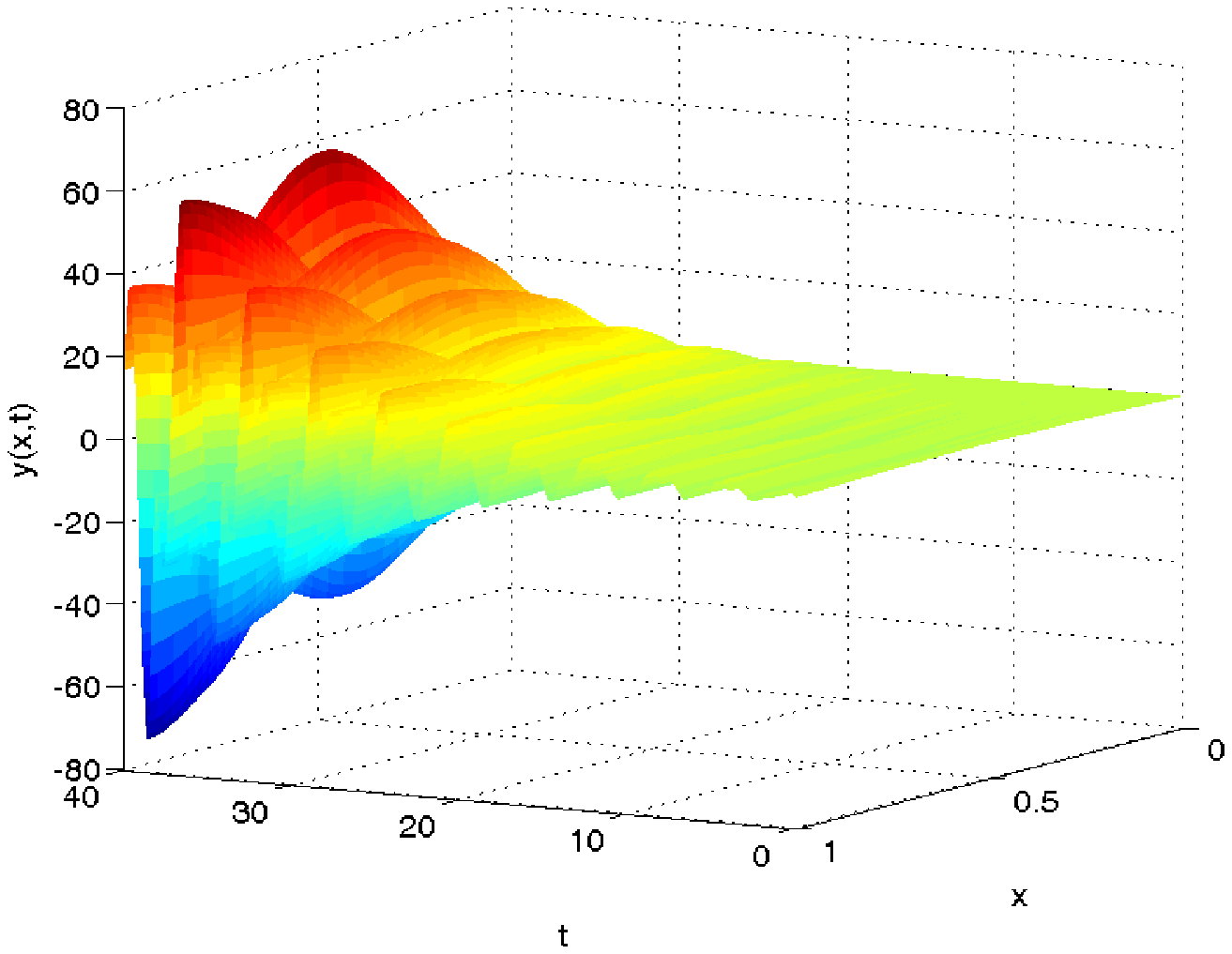}
\label{fig:subfiga2}
}
\subfigure[]{
\includegraphics[scale=0.5]{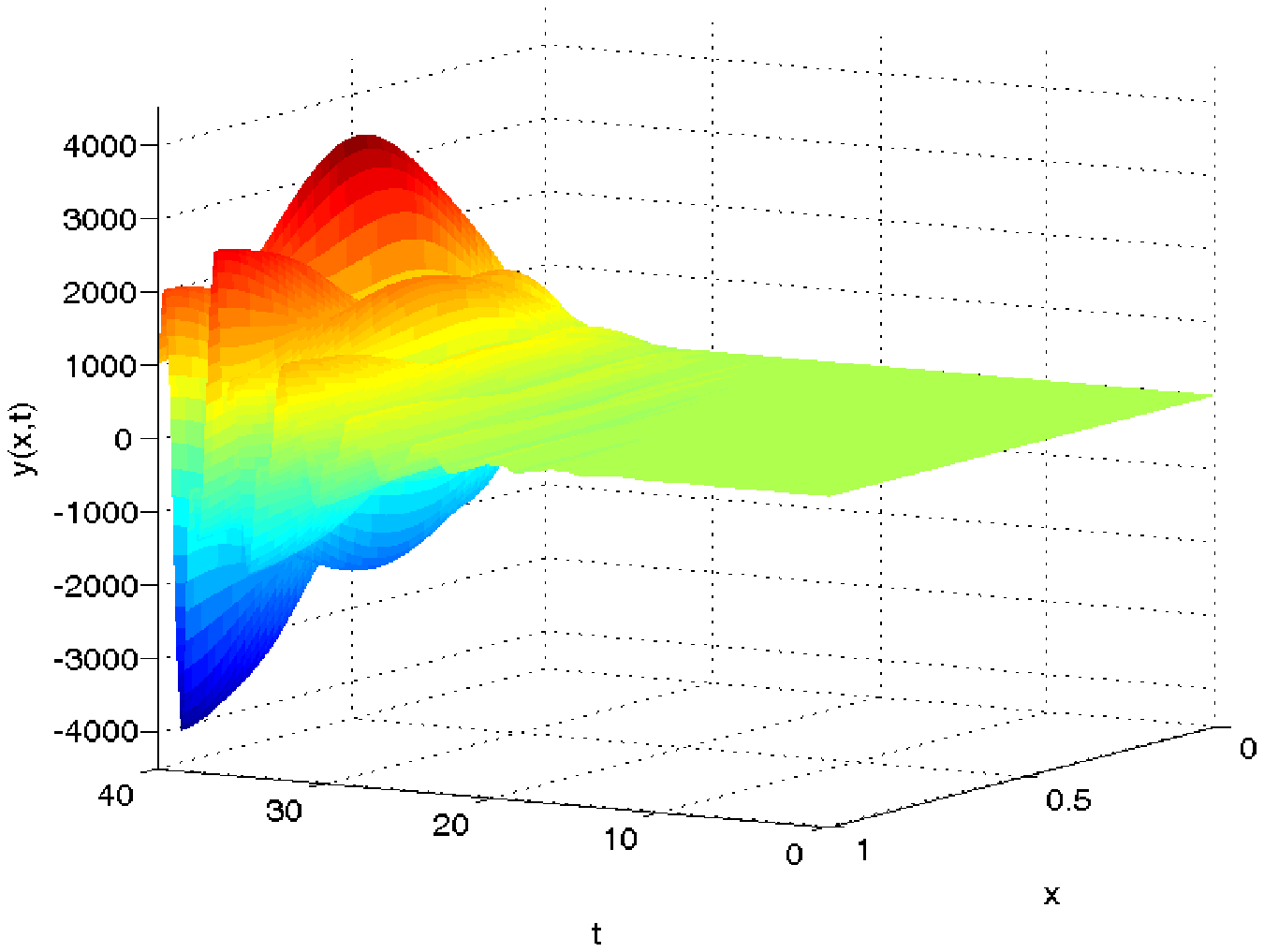}
\label{fig:subfigb2}
}
\subfigure[]{
\includegraphics[scale=0.5]{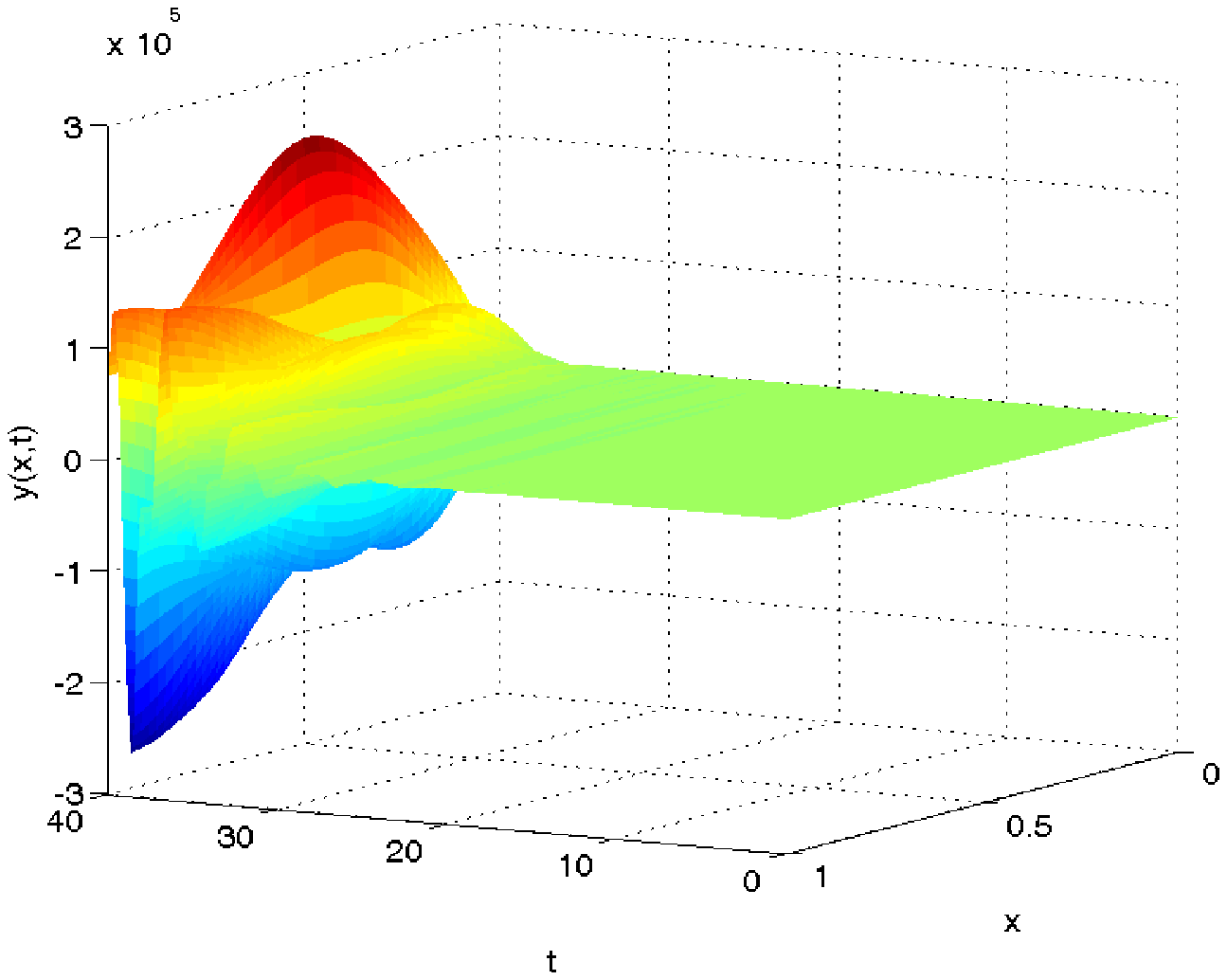}
\label{fig:subfigf2}
}
\subfigure[]{
\includegraphics[scale=0.5]{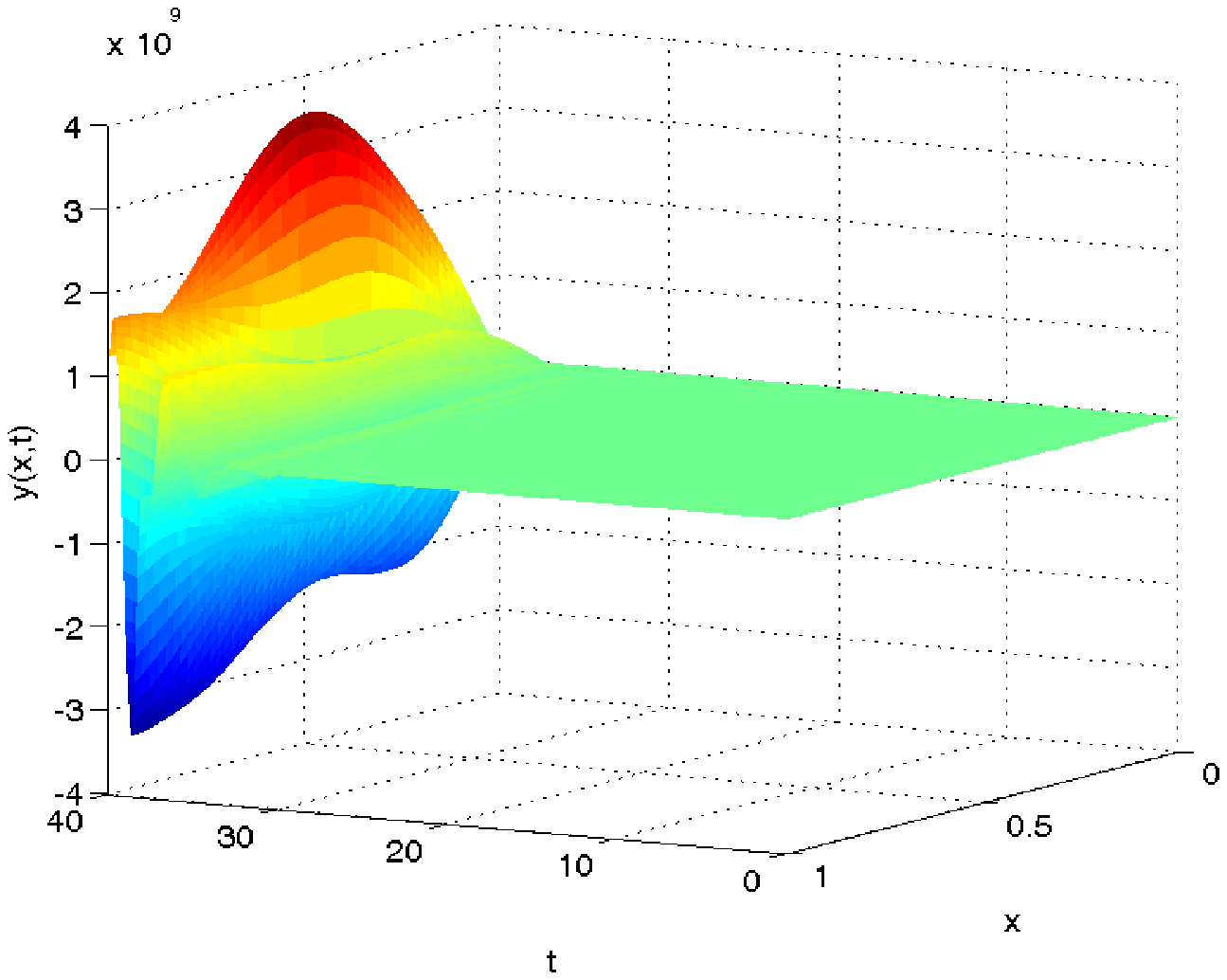}
\label{fig:subfige2}
}
\caption{A 3-d landscape of the dynamics of the wave equation without delay (\ref{nod1})-(\ref{nod4}), when $l=1$, $y(x,0)=\sin \pi x$, and $y_t(x,0)=\sin \pi x$;  (a) $\mu=0.1$; (b) $\mu=0.2$;  (c) $\mu = 0.3$; (d) $\mu=0.5$.}
\label{fig:Chapter4-03b}
\end{figure}
\end{center}

\newpage
\vspace*{1.5in}
\begin{center}
\begin{figure}[!ht]
\centering
\subfigure[]{
\includegraphics[scale=0.5]{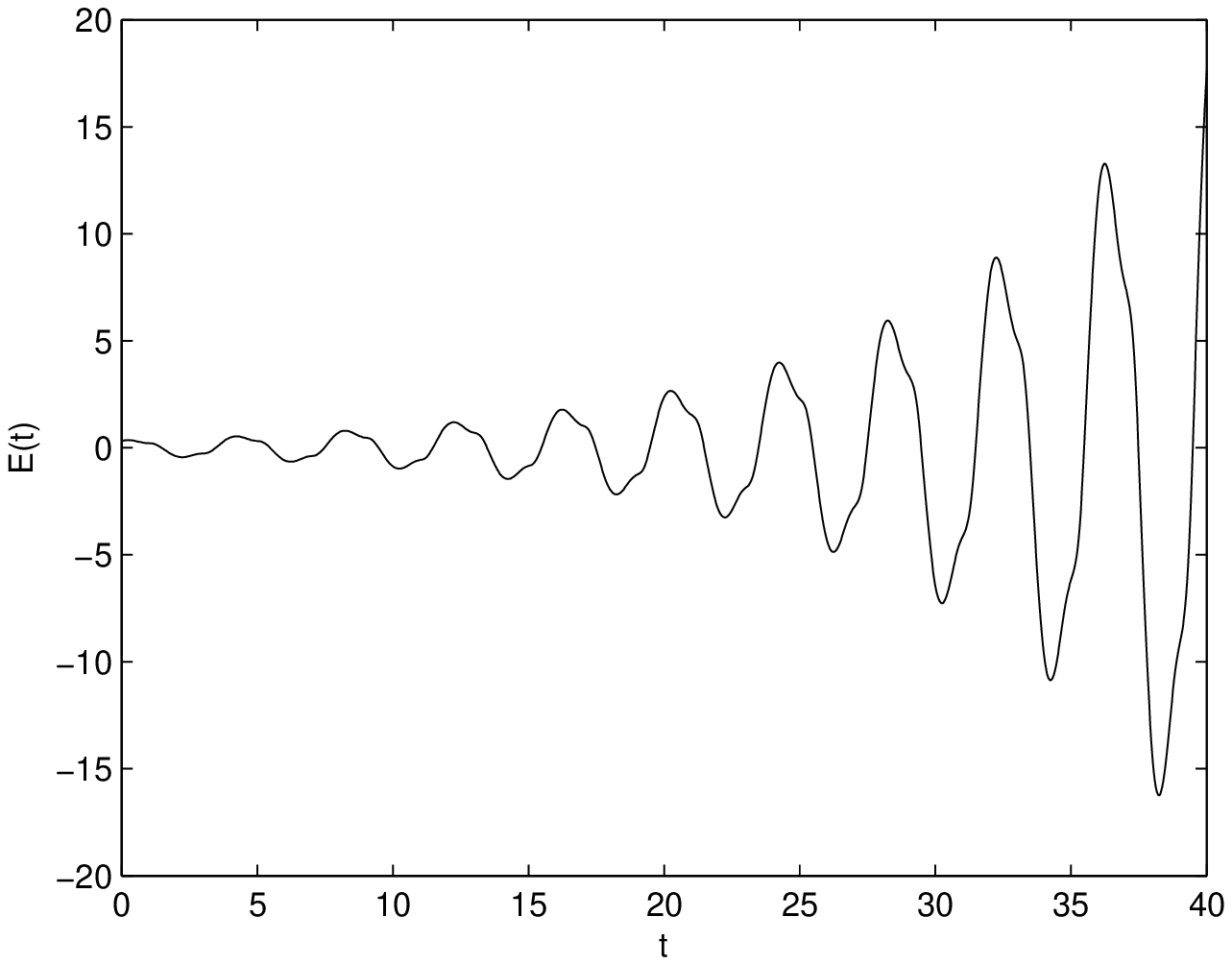}
\label{fig:subfiga3}
}
\subfigure[]{
\includegraphics[scale=0.5]{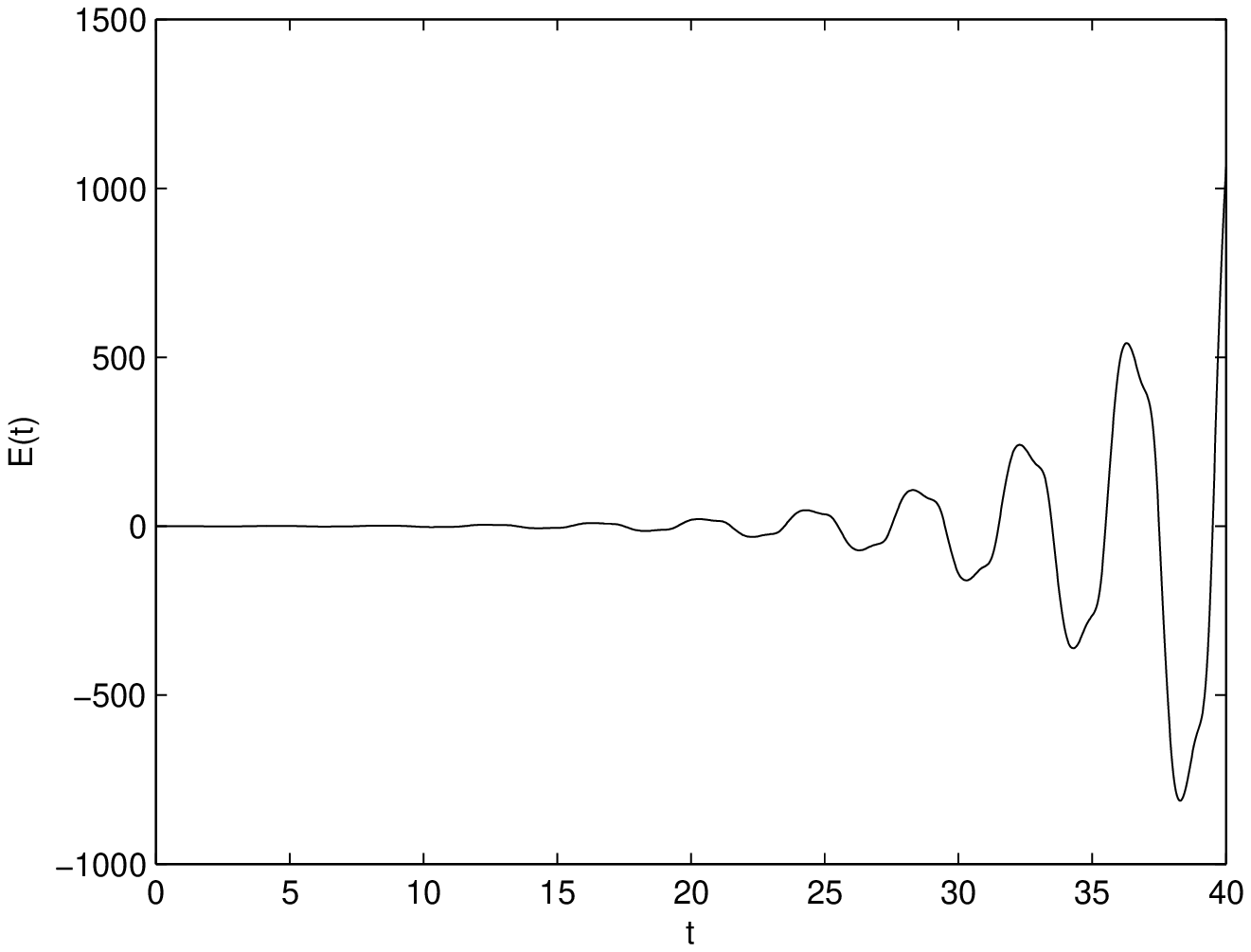}
\label{fig:subfigb3}
}
\subfigure[]{
\includegraphics[scale=0.5]{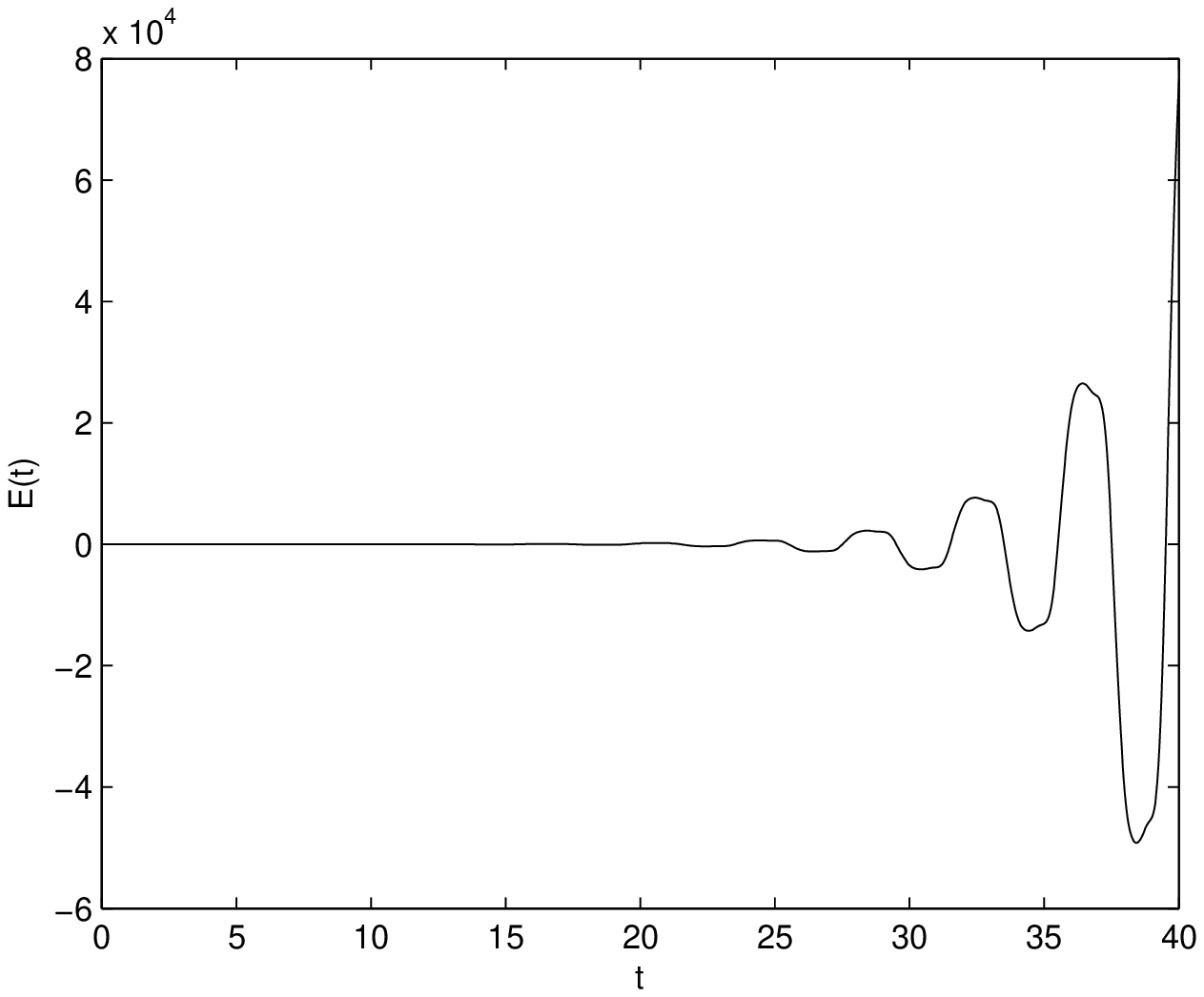}
\label{fig:subfigf3}
}
\subfigure[]{
\includegraphics[scale=0.5]{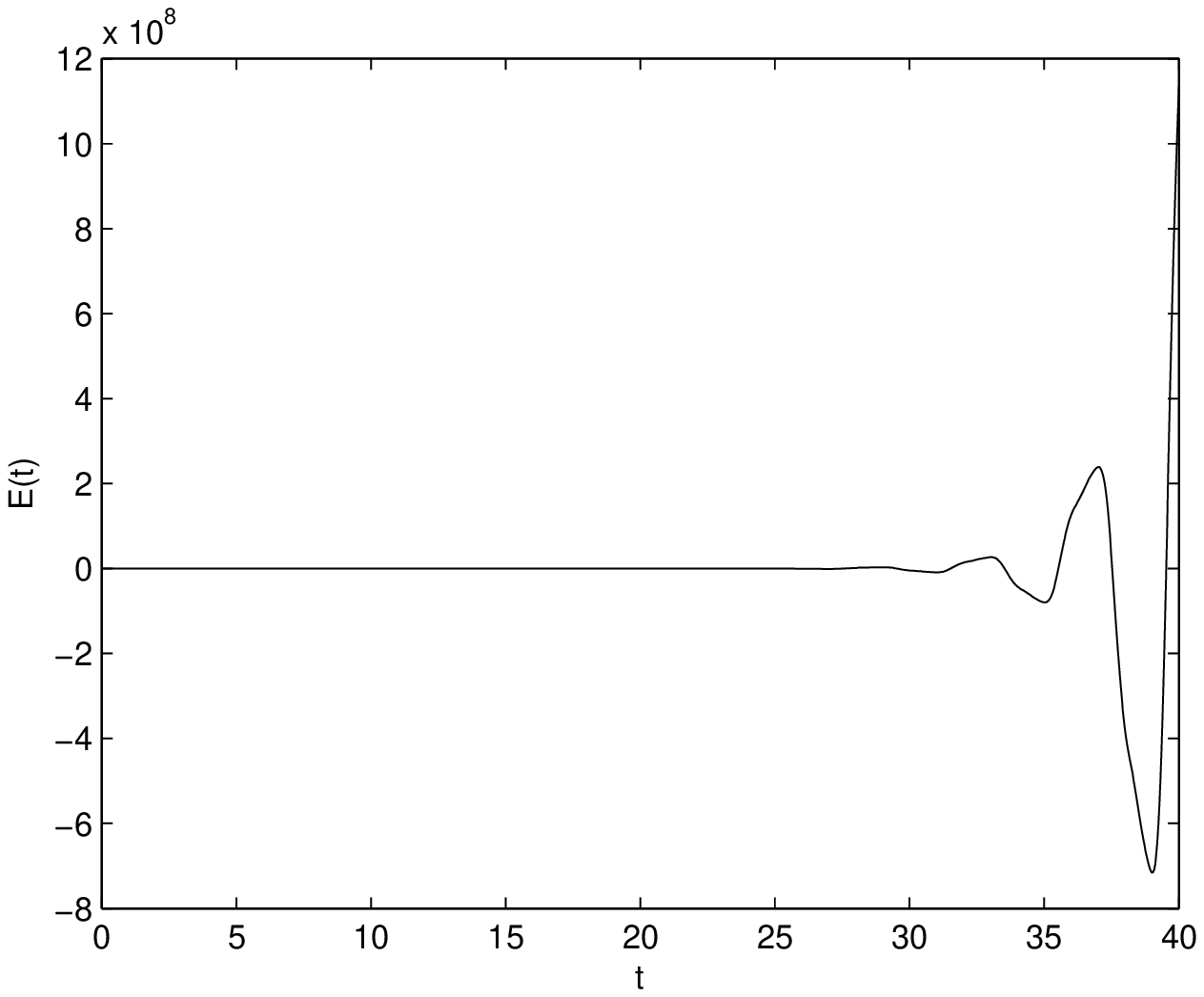}
\label{fig:subfige3}
}
\caption{The energy, E(t), of the dynamics of the wave equation without time delay (\ref{nod1})-(\ref{nod4}):  (a) $\mu=0.1$; (b) $\mu=0.2$;  (c) $\mu = 0.3$; (d) $\mu=0.5$.}
\label{fig:Chapter4-03c}
\end{figure}
\end{center}
\newpage
\vspace*{1.5in}
\begin{center}
\begin{figure}[!ht]
\centering
\subfigure[]{
\includegraphics[scale=0.5]{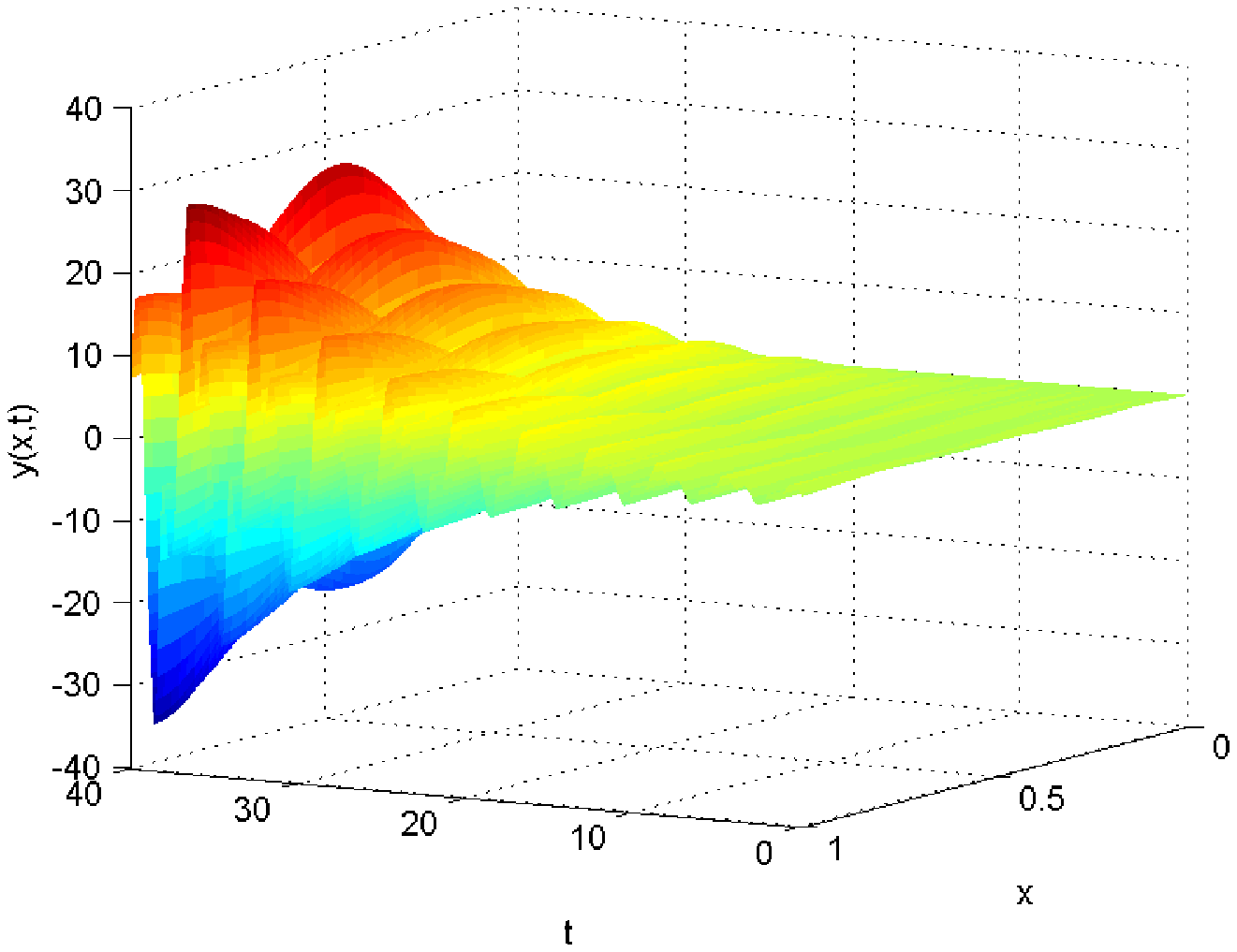}
\label{fig:subfiga4}
}
\subfigure[]{
\includegraphics[scale=0.5]{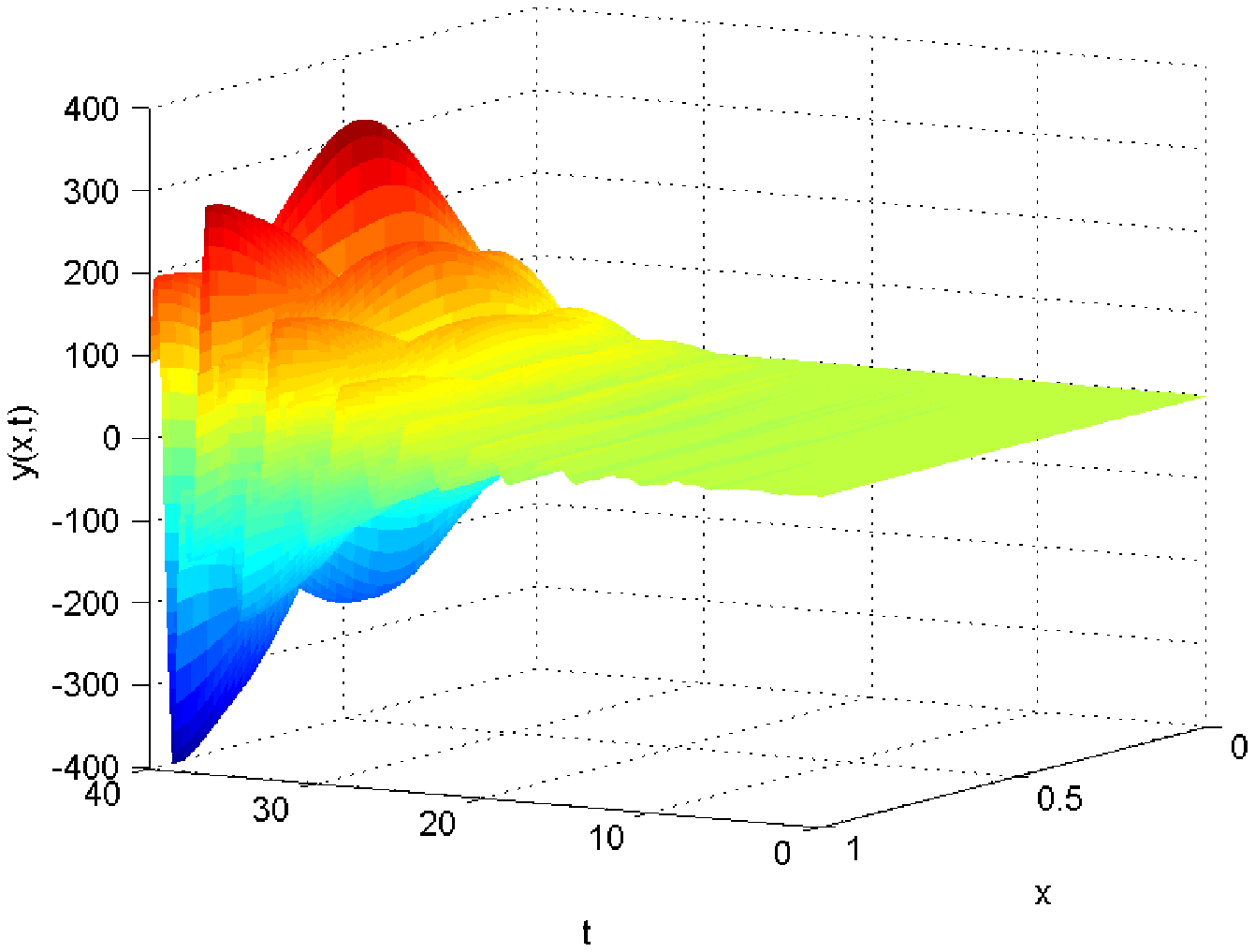}
\label{fig:subfigb4}
}
\subfigure[]{
\includegraphics[scale=0.5]{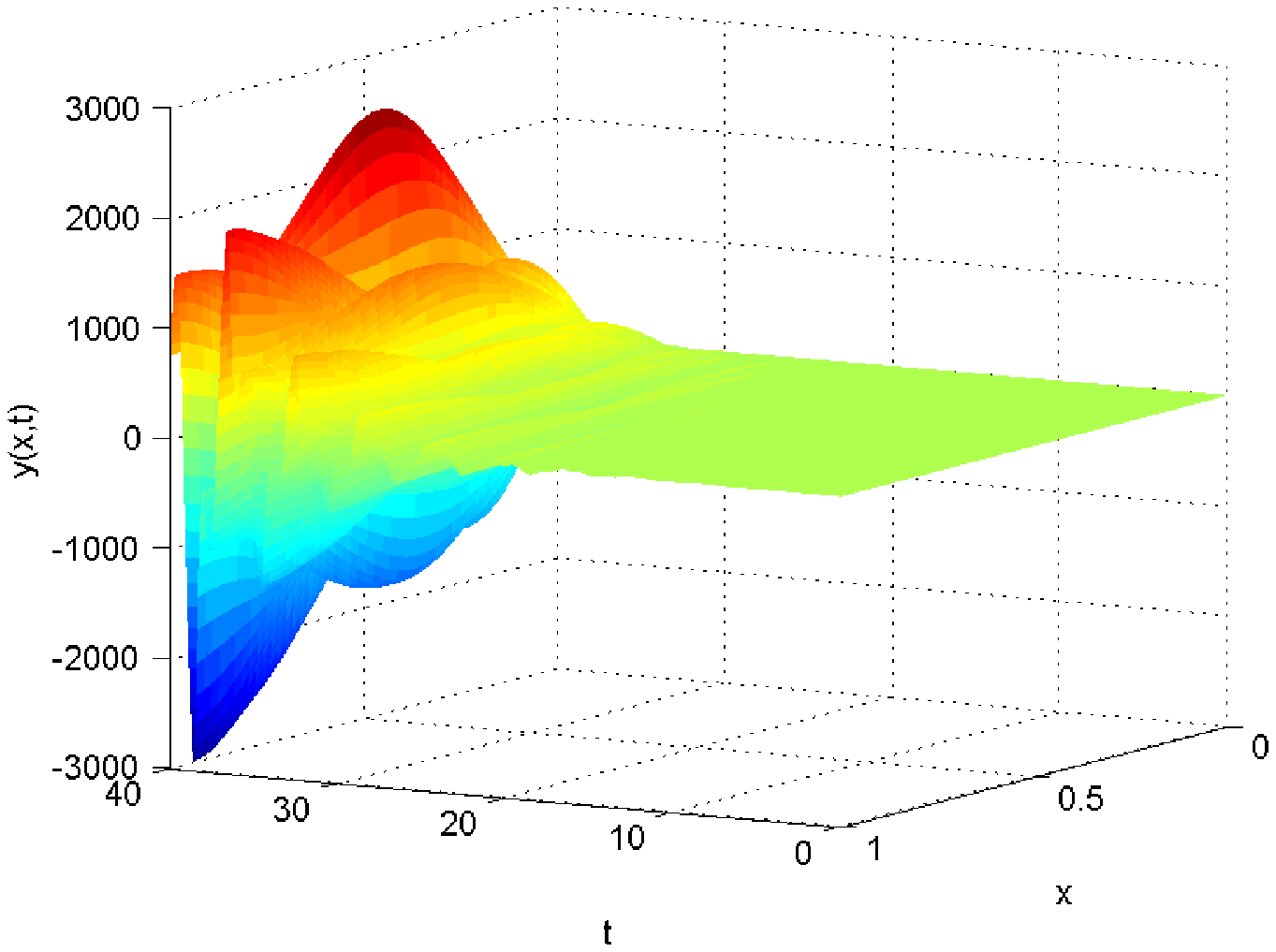}
\label{fig:subfigf4}
}
\subfigure[]{
\includegraphics[scale=0.5]{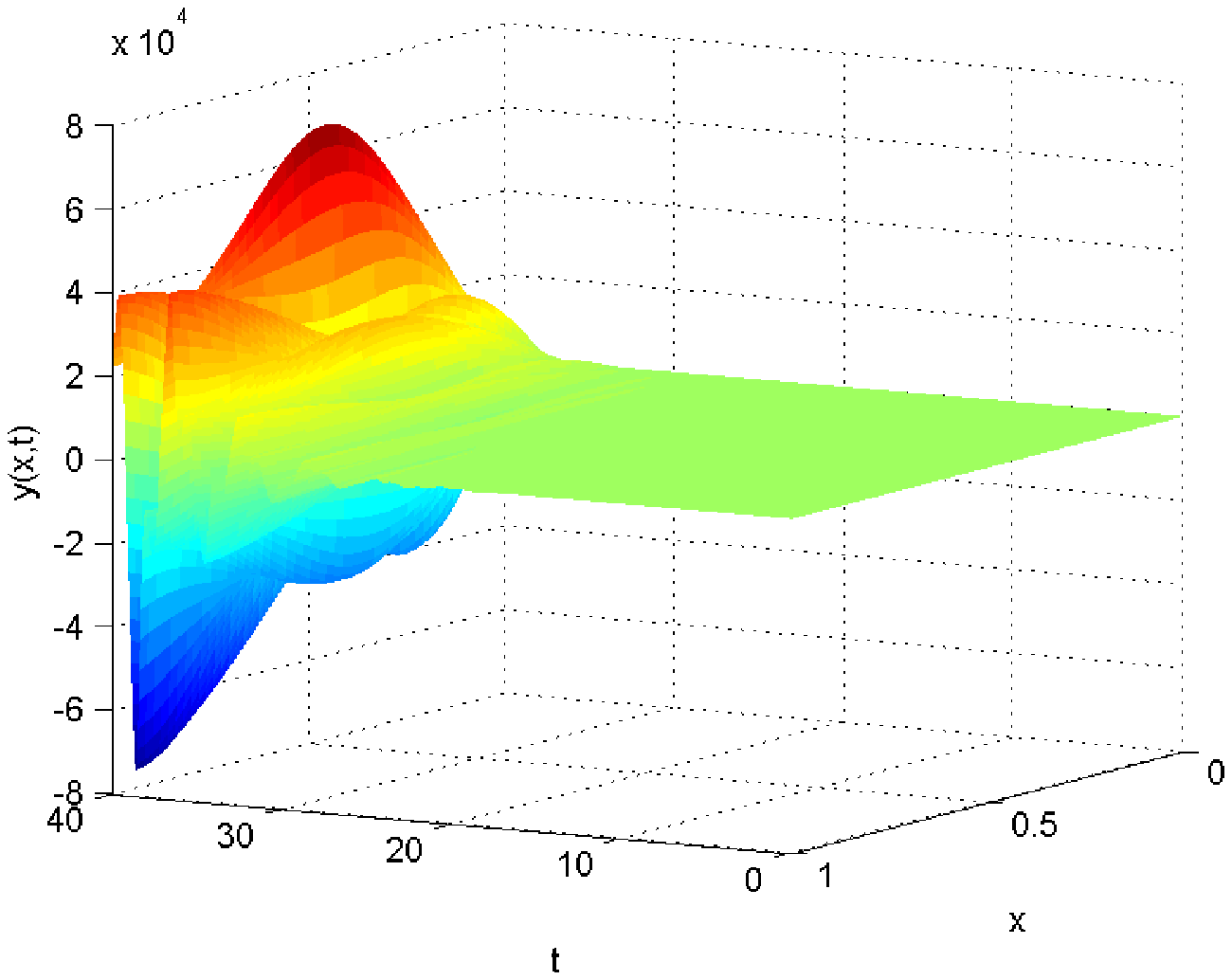}
\label{fig:subfige4}
}
\caption{A 3-d landscape of the dynamics of the wave equation with time delay $\tau=2\ell$, when $\ell=1$, $y(x,0)=\sin \pi x$, and $y_t(x,0)=\sin \pi x$;  (a) $\mu=-0.1$; (b) $\mu=-0.2$;  (c) $\mu = -0.3$; (d) $\mu=-0.5$.}
\label{fig:Chapter4-03d}
\end{figure}
\end{center}
\newpage
\vspace*{1.5in}
\begin{center}
\begin{figure}[!ht]
\centering
\subfigure[]{
\includegraphics[scale=0.5]{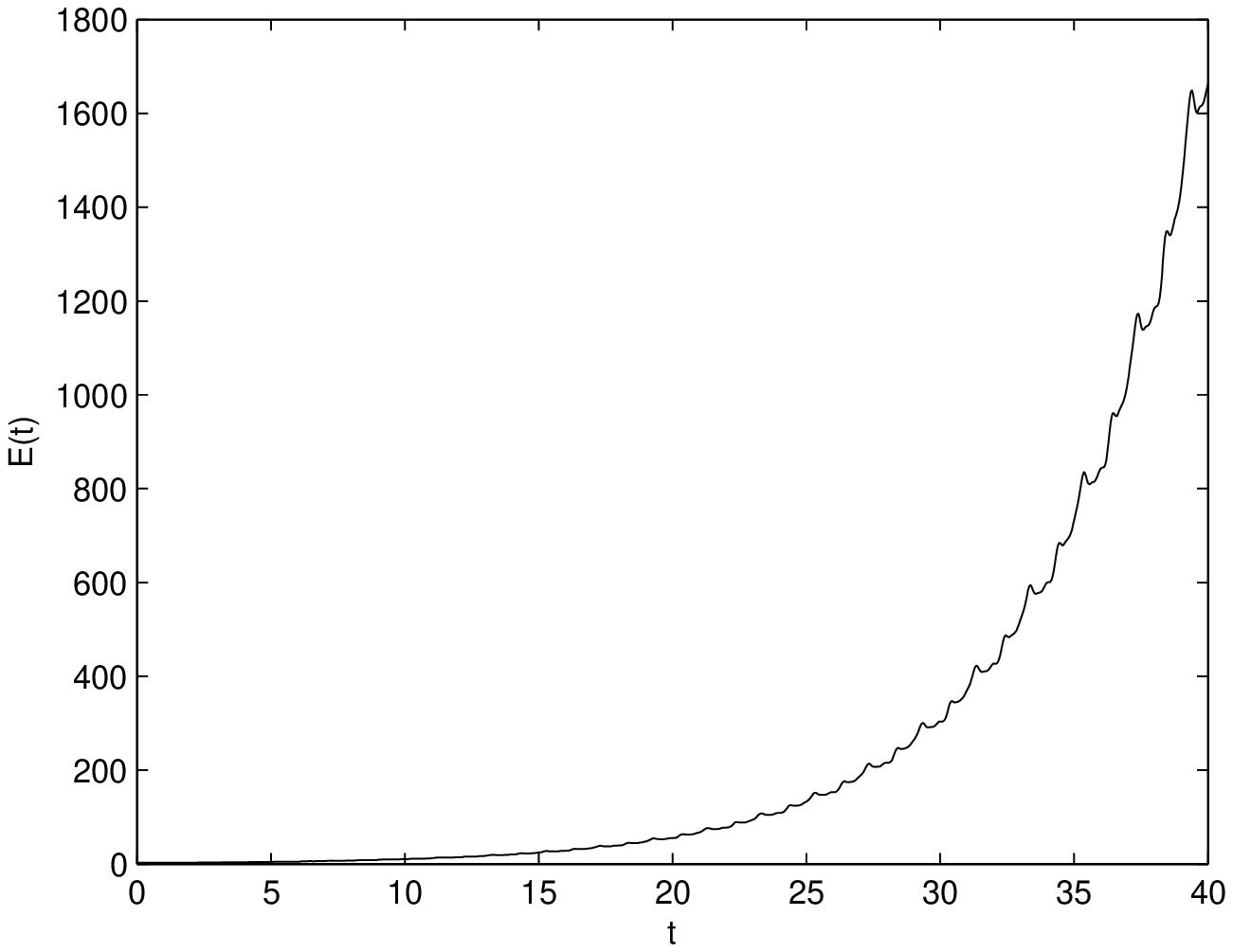}
\label{fig:subfiga5}
}
\subfigure[]{
\includegraphics[scale=0.5]{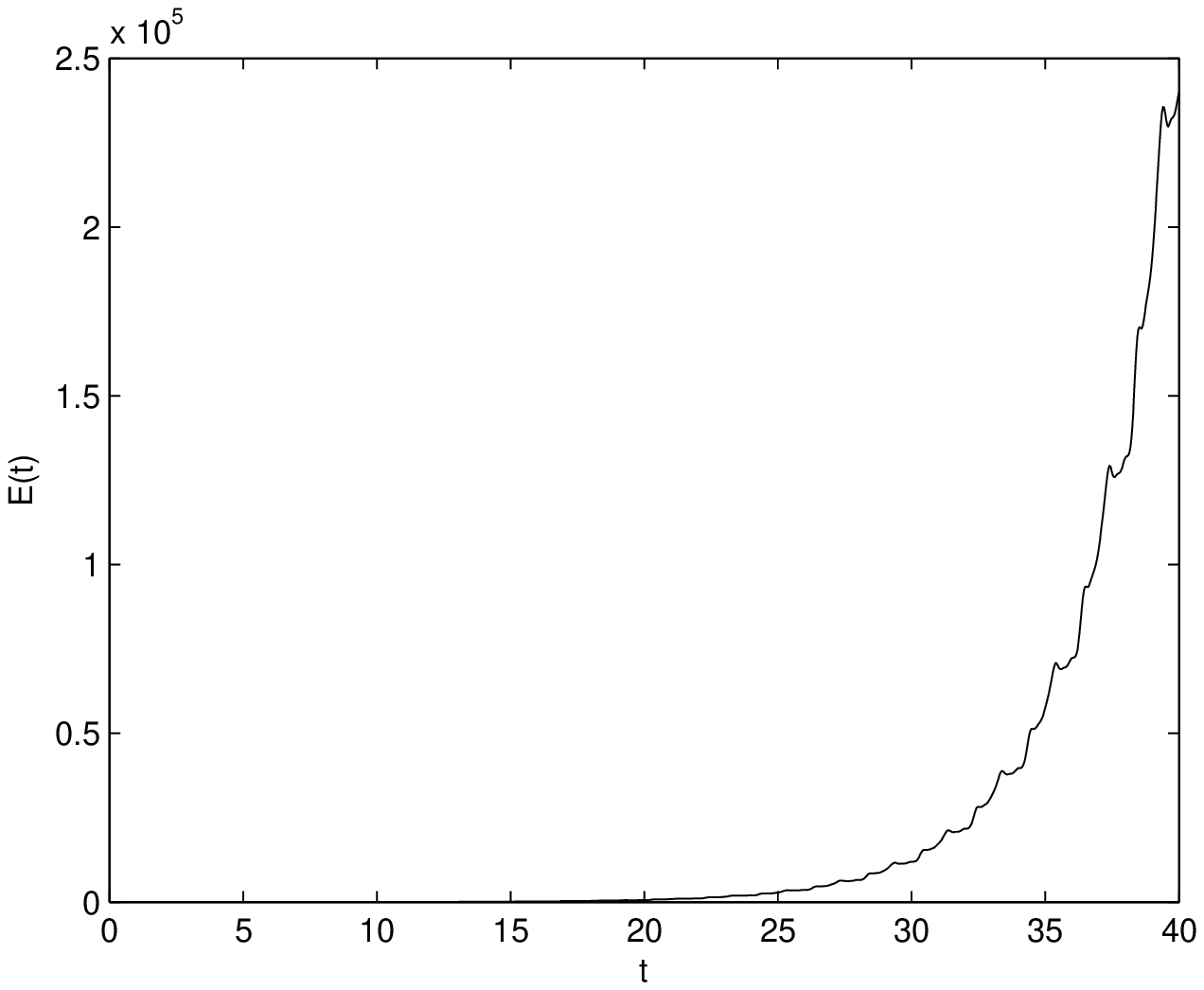}
\label{fig:subfigb5}
}
\subfigure[]{
\includegraphics[scale=0.5]{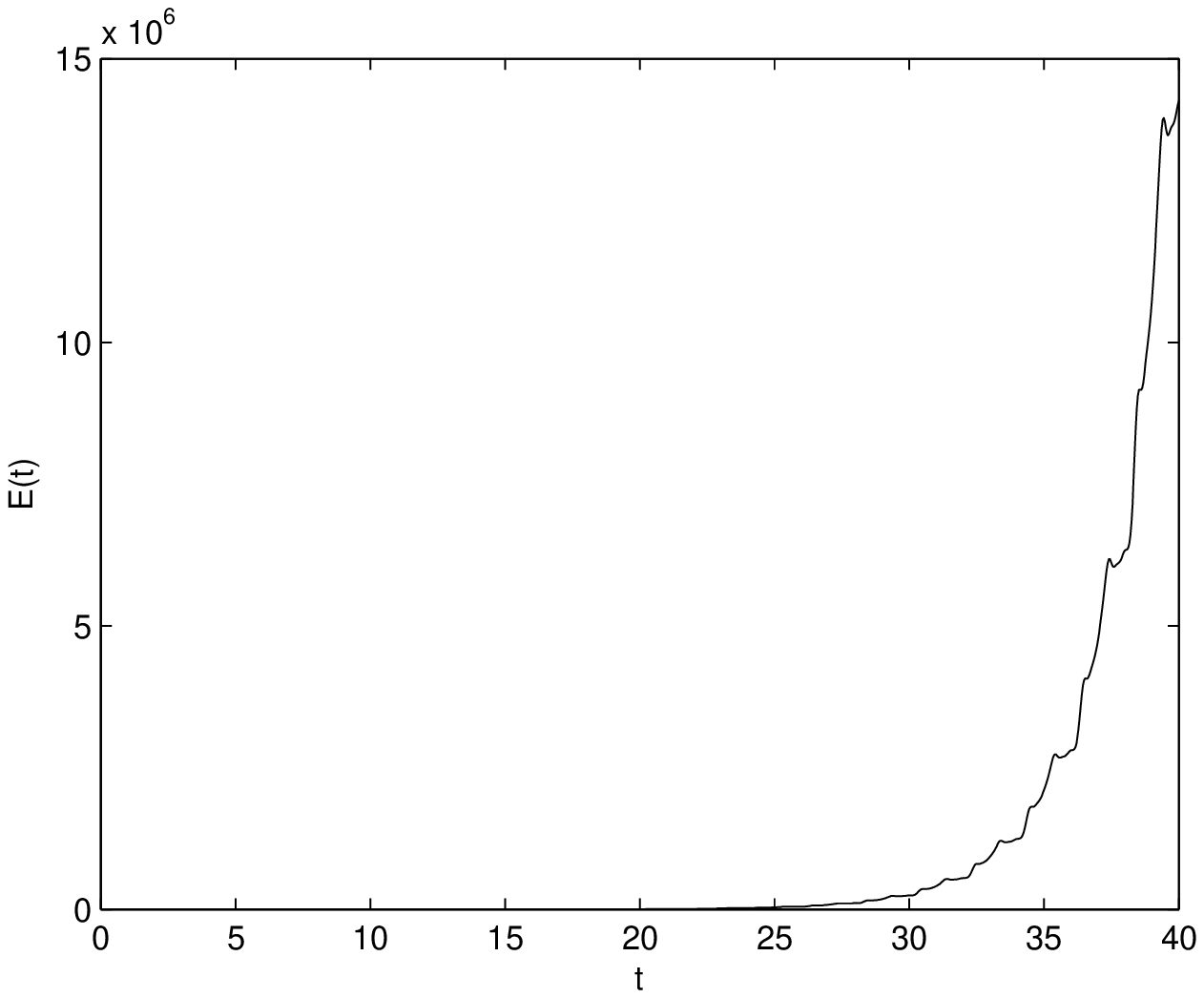}
\label{fig:subfigf5}
}
\subfigure[]{
\includegraphics[scale=0.5]{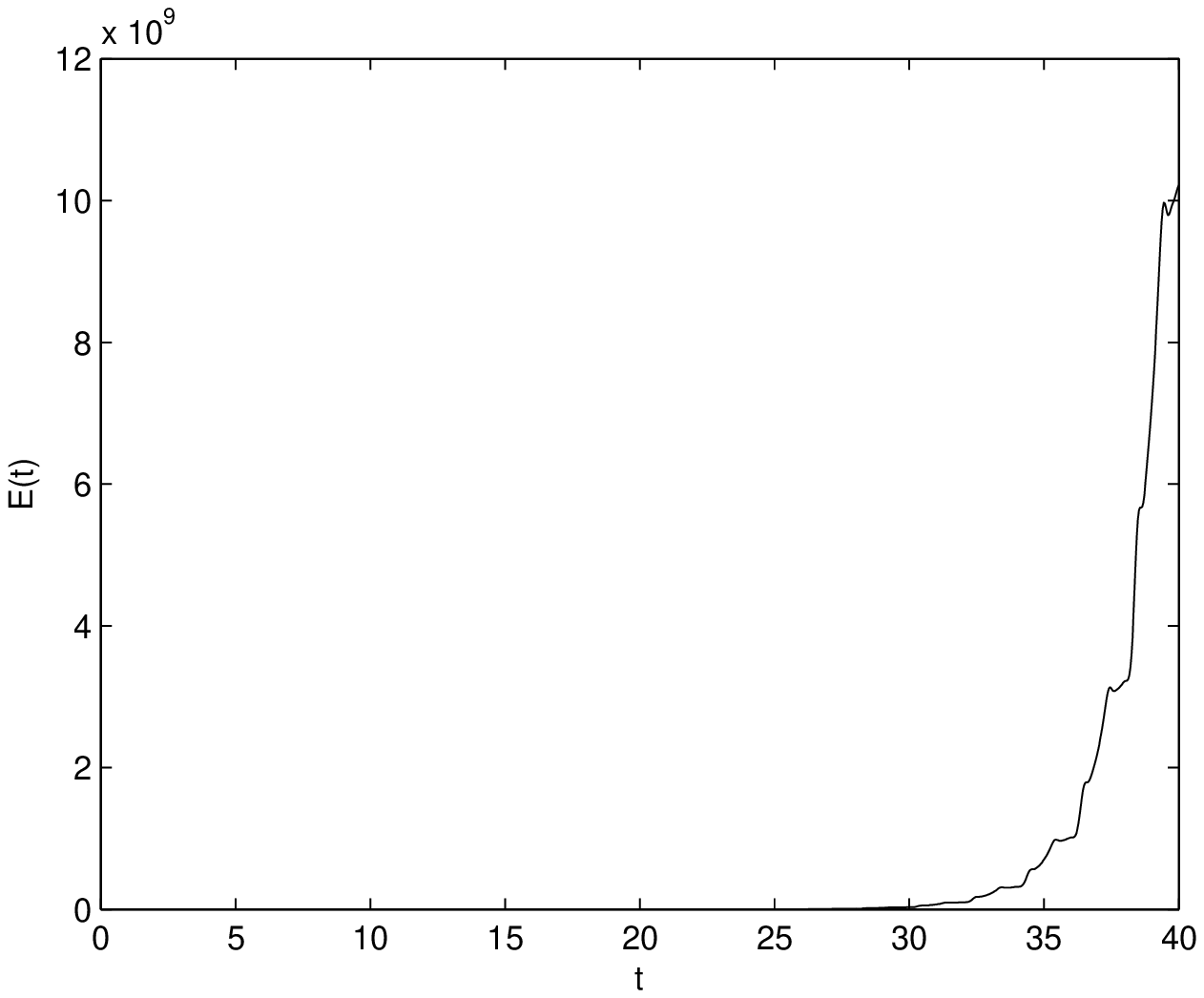}
\label{fig:subfige5}
}
\caption{The energy, E(t),  of the dynamics of the wave equation with time delay $\tau=2\ell$, when $\ell=1$, $y(x,0)=\sin \pi x$, and $y_t(x,0)=\sin \pi x$;  (a) $\mu=-0.1$; (b) $\mu=-0.2$;  (c) $\mu = -0.3$; (d) $\mu=-0.5$.}
\label{fig:Chapter4-03e}
\end{figure}
\end{center}
\newpage

\begin{figure}[!ht]
\centering
\subfigure[]{
\includegraphics[scale=0.5]{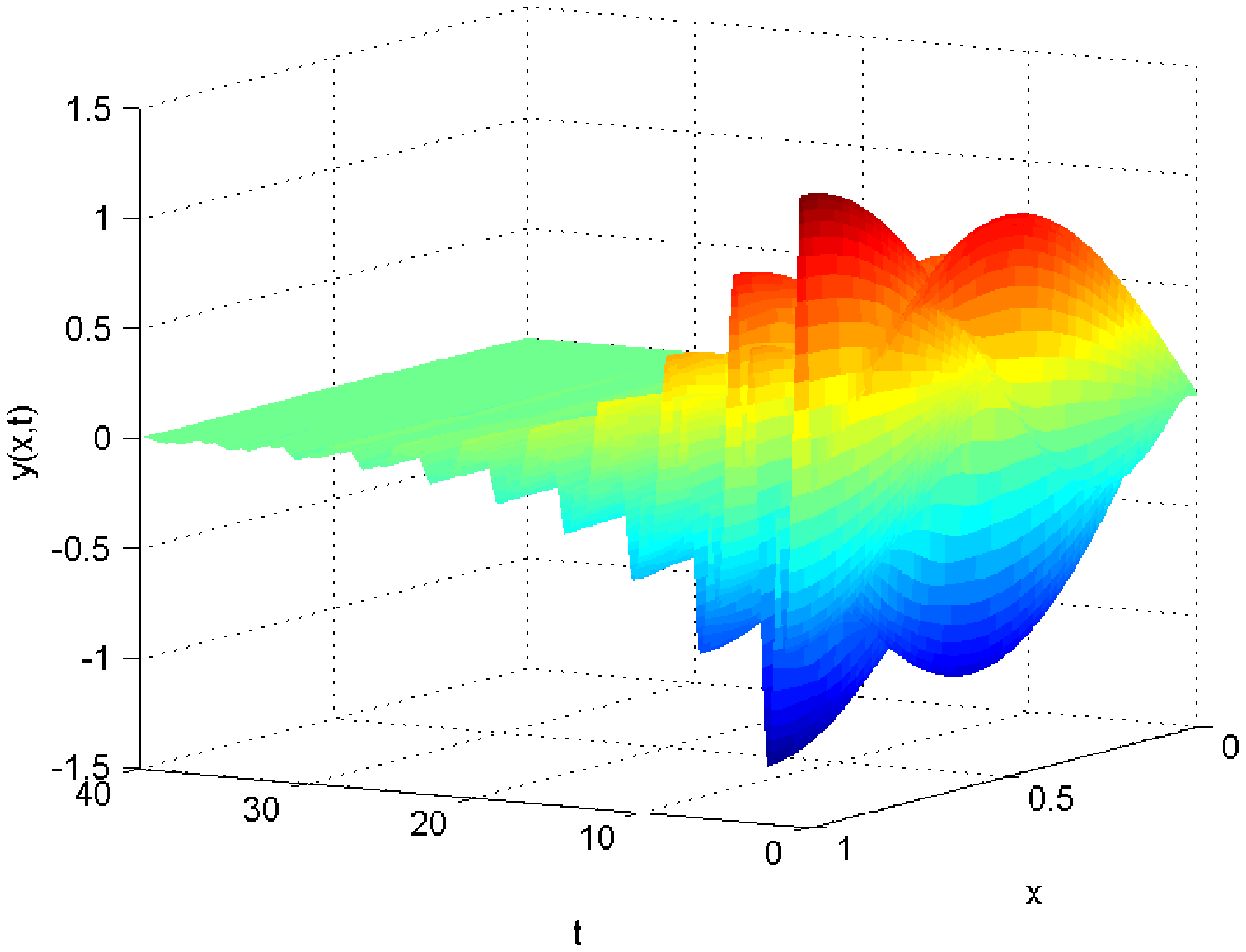}
\label{fig:subfiga6}
}
\subfigure[]{
\includegraphics[scale=0.5]{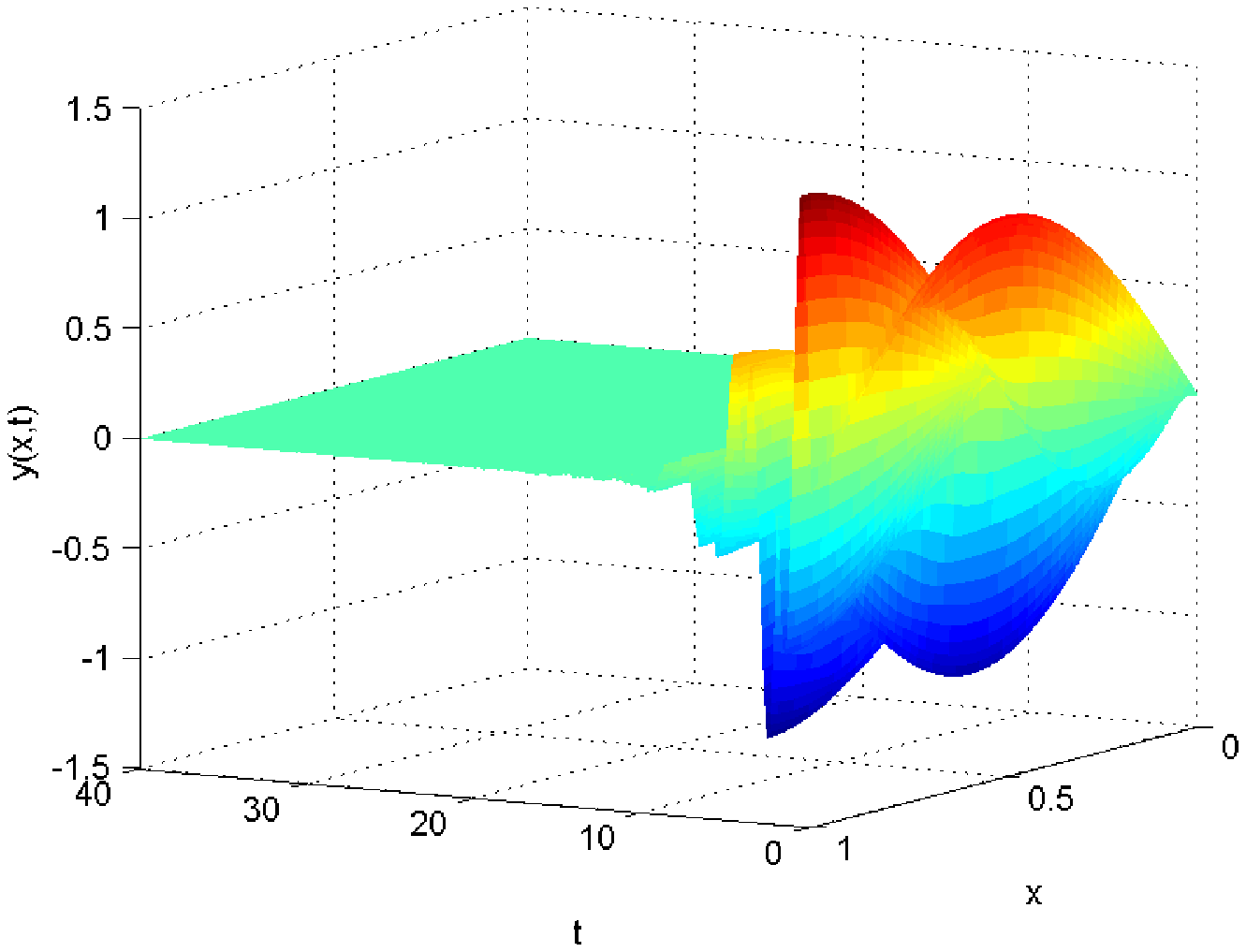}
\label{fig:subfigb6}
}
\subfigure[]{
\includegraphics[scale=0.5]{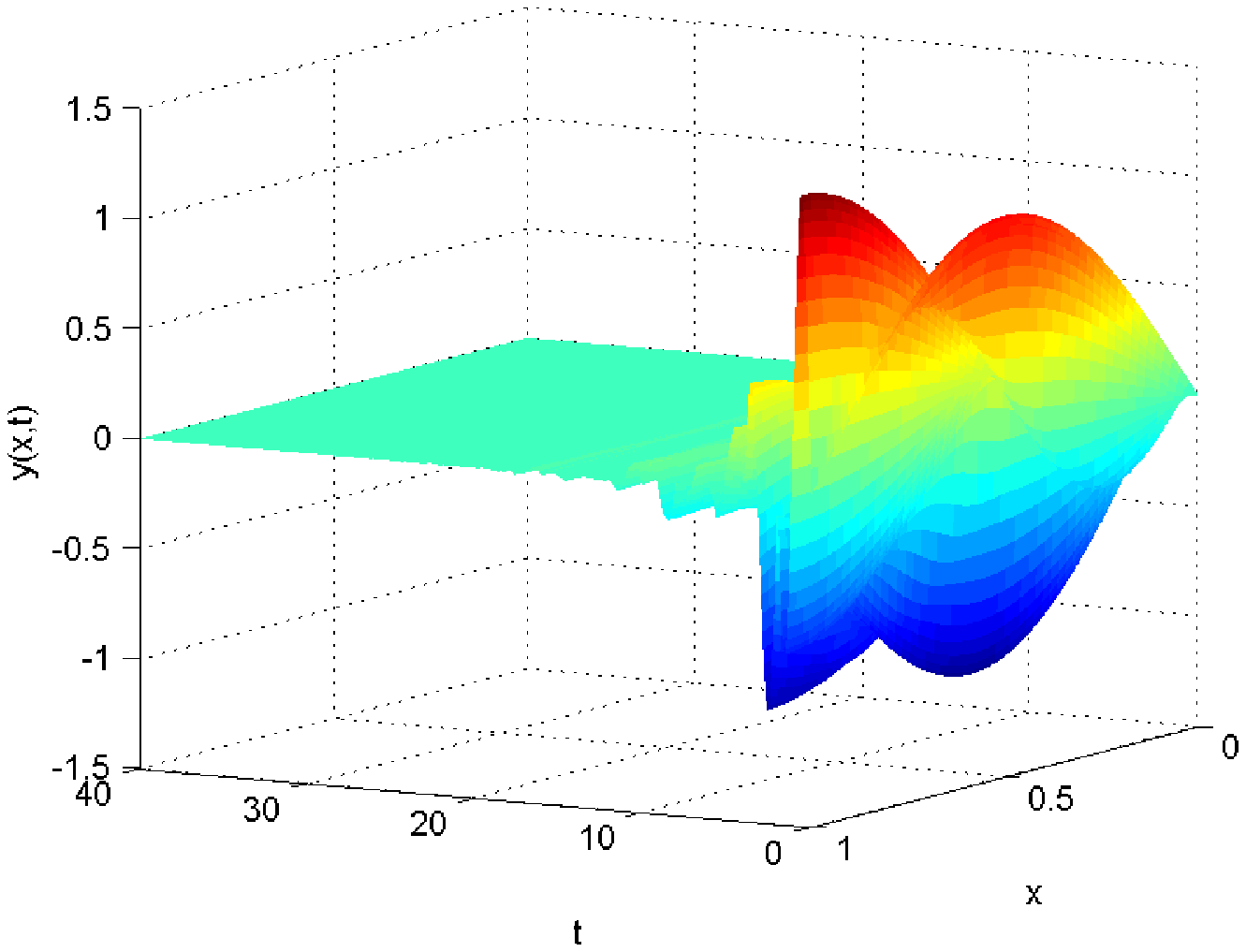}
\label{fig:subfigf6}
}
\subfigure[]{
\includegraphics[scale=0.5]{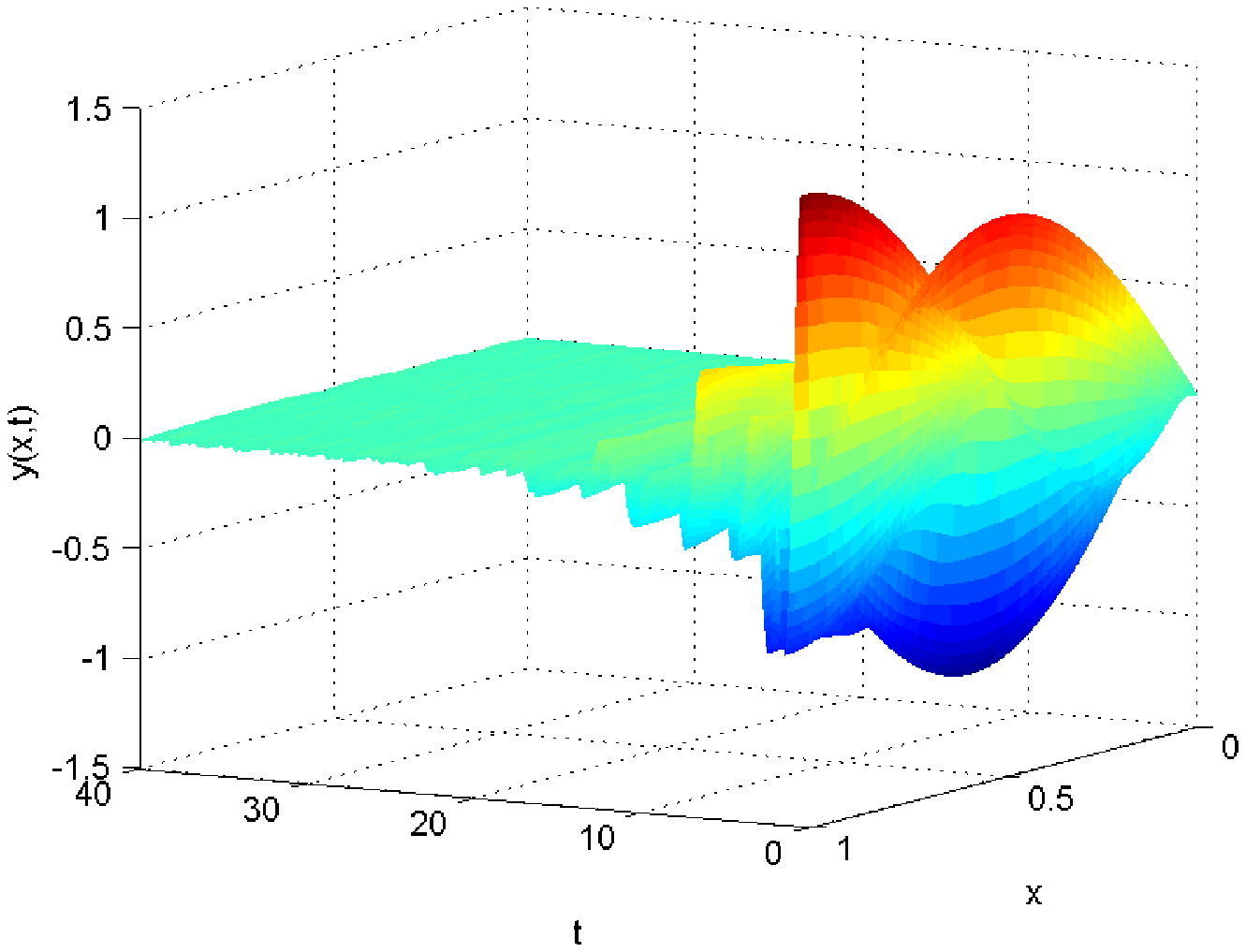}
\label{fig:subfige6}
}
\caption{A 3-d landscape of the dynamics of the wave equation with time delay $\tau=2\ell$, when $\ell=1$, $y(x,0)=\sin \pi x$, and $y_t(x,0)=\sin \pi x$;  (a) $\mu=0.1$; (b) $\mu=0.2$;  (c) $\mu = 0.3$; (d) $\mu=0.5$.}
\label{fig:Chapter4-03f}
\end{figure}

\begin{figure}[!ht]
\centering
\subfigure[]{
\includegraphics[scale=0.5]{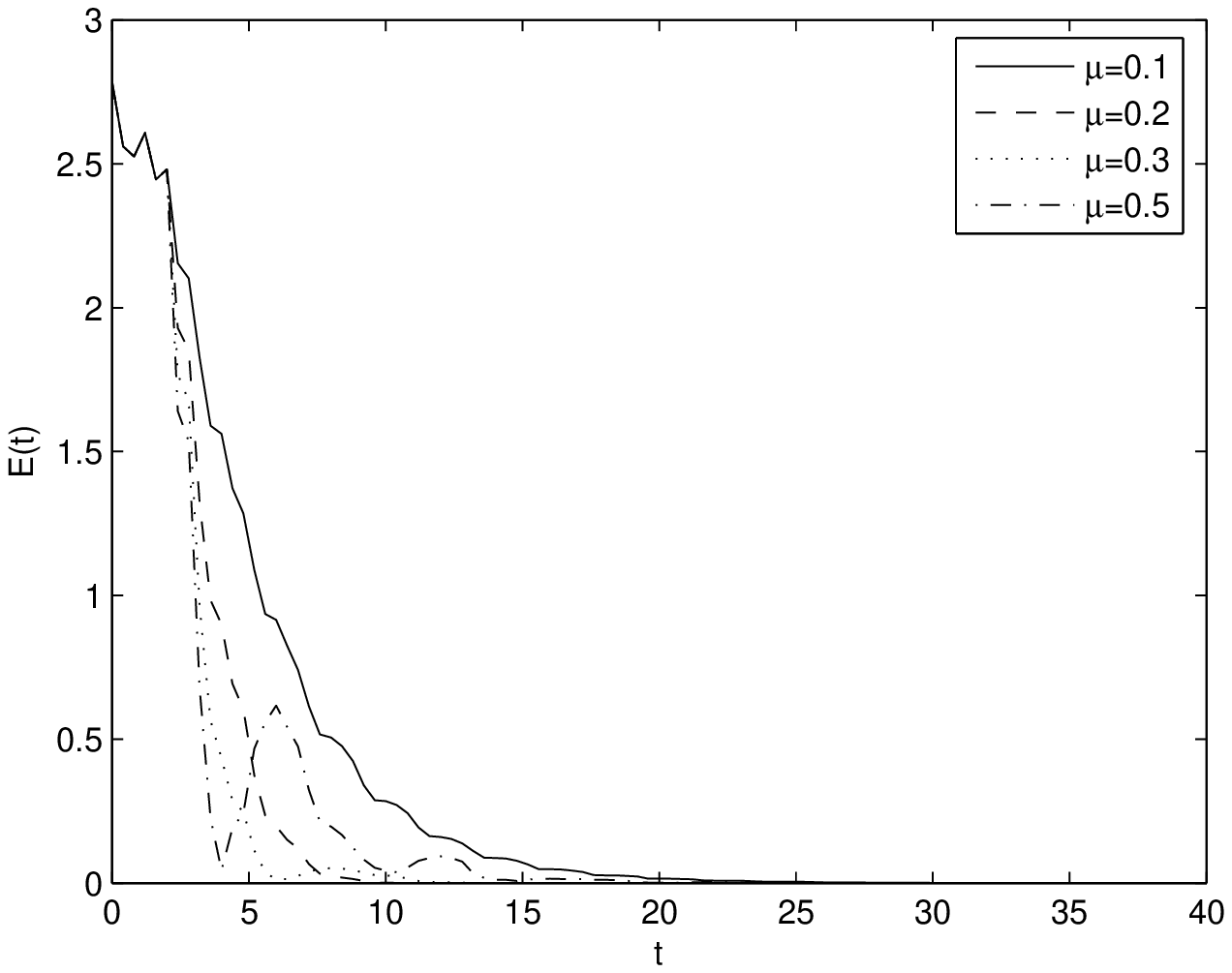}
\label{fig:subfiga7}
}
\subfigure[]{
\includegraphics[scale=0.5]{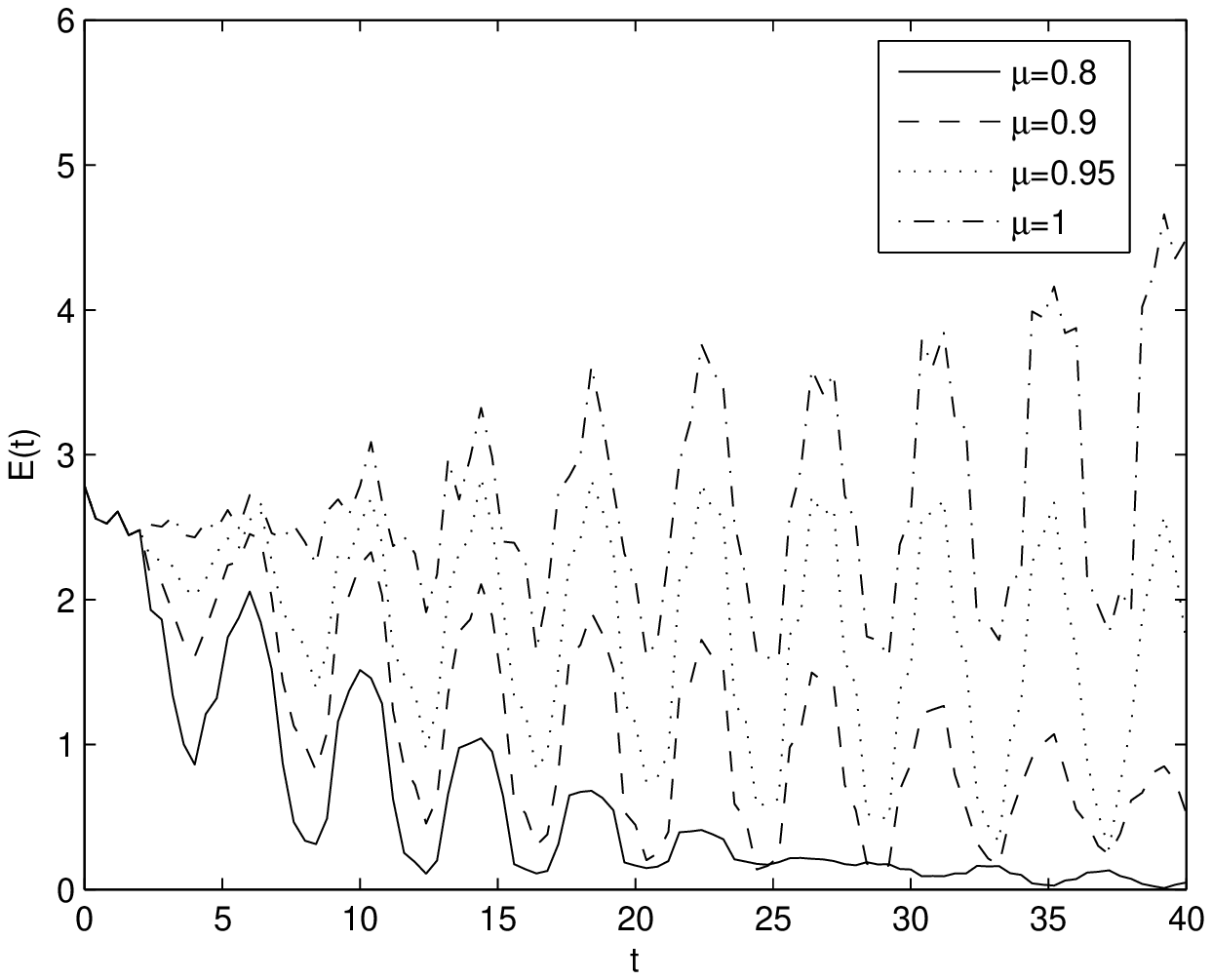}
\label{fig:subfigb7}
}
\caption{The energy, $E(t)$, versus time for various values of $\mu$ and when $\tau=2\ell$, with $\ell=1$; a) $\mu=0.1, \ldots, 0.5$; b) $\mu=0.8, \ldots, 1.0$; }
\end{figure}
\begin{figure}[!ht]
\begin{center}
\includegraphics[width=6.5 in,height=6 in ]{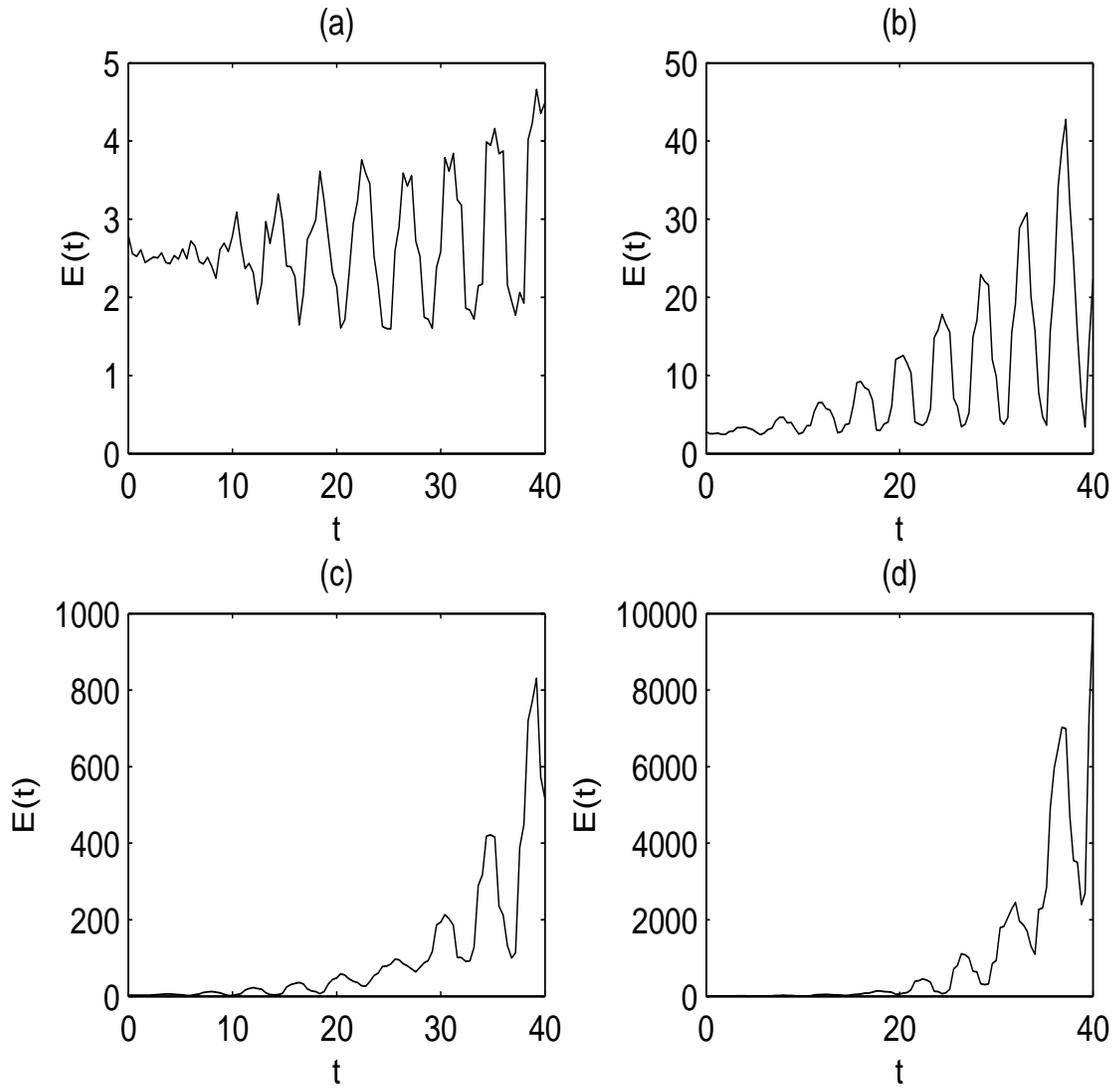}
\end{center}
\caption{The energy, $E(t)$, versus time for various values of $\mu$ and when $\tau=2\ell$, with $\ell=1$;  a) $\mu=1$; \,b) $\mu=1.1$; \, c) $\mu=1.3$; \, d) $\mu=1.5$.}
\end{figure}

\newpage

\end{document}